\newtheorem{thm}{Theorem}[section]
\newtheorem{lemma}[thm]{Lemma}
\newtheorem{prop}[thm]{Proposition}
\newtheorem{remark}[thm]{Remark}
\newtheorem*{thm*}{Theorem}
\theoremstyle{definition}
\numberwithin{equation}{section}
\newcommand{\lp}{\left(}
\newcommand{\rp}{\right)}
\newcommand{\R}{\mathbb{R}}
\newcommand{\RR}{\mathbb R^{n+1}_+}
\DeclareMathOperator{\divergence}{div}
\DeclareMathOperator{\re}{Re}
\newcommand{\grad}[2]{|\nabla #1|^{#2}}
\newcommand{\abs}[1]{\left\lvert #1\right\rvert}
\newcommand{\norm}[1]{\left\lVert #1\right\rVert}
\newcommand{\be}{\begin{equation}}
\newcommand{\ee}{\end{equation}}
\newcommand{\bee}{\begin{equation*}}
\newcommand{\eee}{\end{equation*}}
\newcommand{\bea}{\begin{eqnarray}}
\newcommand{\eea}{\end{eqnarray}}
\newcommand{\bs}{\begin{split}}
\newcommand{\es}{\end{split}}
\begin{document}

\title{\bf Recent progress on the fractional Laplacian in conformal geometry}
\author{Mar\'ia del Mar Gonz\'alez \thanks{M.d.M. Gonz\'alez is supported by
Spain Government project MTM2014-52402-C3-1-P and the BBVA Foundation grant for Investigadores y Creadores Culturales, and is part of the Catalan research group 2014SGR1083.}\\ Universidad Aut\'onoma de Madrid}
\date{}
\maketitle

\abstract{}
% -------------------------------------------------------------------------------------------------------
% -------------------------------------------------------------------------------------------------------

\section{Introduction}

The aim of this paper is to report on  recent development on the conformal fractional Laplacian, both from the analytic and geometric points of view, but especially towards the PDE community.

The basic tool in conformal geometry is to understand the geometry of a space by studying the transformations on such space. More precisely, let $M^n$ be a smooth Riemannian manifold of dimension $n\geq 2$ with a Riemannian metric $h$. A \emph{conformal} change of metric is such that it preserves angles so, mathematically, two metrics $h$, $\tilde h$ are conformally related if
 $$\tilde h=f h \quad\text{for some function } f>0.$$
We say that an operator $A(=A_h)$ on $M$
is \emph{conformally covariant} if under the change of metric $h_w=e^{2w} h$, then $A$ satisfies the transformation (sometimes called intertwining) law
\begin{equation}\label{intertwining-introduction}A_{h_w} \phi =e^{-bw} A_h(e^{aw}\phi)\quad\mbox{for all}\quad \phi\in \mathcal C^\infty(M),\end{equation}
for some $a,b\in\mathbb R$.
One may associate to such $A$ a notion of curvature with interesting conformal properties as
$$Q^h_A:= A_{h}(1).$$
The intertwining rule  \eqref{intertwining-introduction} then yields the $Q_A$-curvature equation
$$A_h(e^{aw})=e^{bw}  Q_A^{h_w}.$$

The most well known example of a conformally covariant operator is the conformal Laplacian
\begin{equation}\label{conformal-Laplacian}L_h :=-\Delta_h +\tfrac {n-2}{4(n-1)} R_h,\end{equation}
and its associated curvature is precisely the scalar curvature $R_h$ (modulo a multiplicative constant). The conformal transformation law is usually written as
\begin{equation}\label{conformal-property-Laplacian}L_{h_u}(\cdot)=
u^{-\frac{n+2}{n-2}} \,L_{h} (u\,\cdot\,)\end{equation}
for a change of metric
$$h_u=u^{\frac{4}{n-2}}h,\quad u>0,$$
and it gives rise to interesting semilinear equations with reaction term of power type, such as the constant scalar curvature (or Yamabe) equation
\begin{equation}\label{Yamabe-equation}
-\Delta_{h} u+ \tfrac {n-2}{4(n-1)} R_h= \tfrac {n-2}{4(n-1)} c u^{\frac{n+2}{n-2}}.
\end{equation}
The Yamabe problem (see \cite{Schoen-Yau:book,Lee-Parker} for a general background) has been one of the multiple examples of the interaction between analysis and geometry.

A higher order example of a conformally covariant operator is the Paneitz operator (\cite{Paneitz}), which is defined as the bi-Laplacian $(-\Delta_h)^2$ plus lower order curvature terms. Its associated curvature, known as $Q$-curvature, a is fourth-order geometric object that has received a lot of attention (see \cite{Chang:Zurich} and the references therein). The generalizaton to all even orders $2k$ was investigated by Graham, Jenne, Mason and Sparling ({\emph{GJMS}) in \cite{GJMS} and is based on the ambient metric construction of \cite{Fefferman-Graham}.

These operators belong to a general framework in which on the manifold $(M,h)$ there exists a meromorphic family of conformally covariant pseudodifferential operators of fractional order $$P^h_s=(-\Delta_h)^s+...\quad\quad\text{for any}\quad s\in(0,n/2).$$
$P_s^h$ will be called the \emph{conformal fractional Laplacian}. The main goal of this discussion is to describe and to give some examples, applications and open problems for this non-local object. The uniqueness issue will be postponed to Section \ref{section:uniqueness}.

 To each of these there exists an associated curvature $Q^h_s$, that generalizes the scalar curvature, the $Q$-curvature and the mean curvature. The $Q^h_s$ constitute a one-parameter family of non-local curvatures on $M$; the objective is to understand the geometric and topological information they contain, together with the study of the new non-local fractional order PDE that arise.

The conformal fractional Laplacian is defined  on the boundary of a Poincar\'e-Einstein manifold in terms of scattering theory (all the necessary background will be explained in Section \ref{section:scattering}).
Research on Poincar\'e-Einstein metrics has its origins in the work of Newman, Penrose and LeBrun \cite{LeBrun} on four dimensional space-time Physics. The subsequent work of Fefferman and Graham \cite{Fefferman-Graham} provided the mathematical framework for the study of conformally invariant (o covariant) operators on the boundary (denoted as the conformal infinity) of a Poincar\'e-Einstein manifold (the ambient) through this approach, though the study of the asymptotics of an eigenvalue problem in the spirit of Maldacena's AdS/CFT correspondence.

The celebrated AdS/CFT correspondence in string theory~\cite{Maldacena,AdS/CFT,Witten} establishes a connection between conformal field
theories in $n$ dimensions and gravity fields on an $(n +
1)$-dimensional spacetime of anti-de Sitter type, to the effect that
correlation functions in conformal field theory are given by the
asymptotic behavior at infinity of the supergravity
action. Mathematically, this involves describing the solution to the
gravitational field equations in $(n + 1)$~dimensions (which, in the
simplest case of a scalar field reduces to the Klein--Gordon equation) in
terms of a conformal field, which plays the role of the boundary data
imposed on the (timelike) conformal boundary.

An equivalent construction for $P^h_s$ has been recently proposed (\cite{Case-Chang}) in the setting of metric measure spaces in relation to Perelman's $W$-functional  \cite{Perelman}. This point of view has the advantage that it justifies the notion of a harmonic function in fractional dimensions that was sketched in the classical paper of Caffarelli and Silvestre for the usual fractional Laplacian \cite{Caffarelli-Silvestre}.\\

In Section \ref{section:extension} we will explain the construction of the conformal fractional Laplacian from a purely analytical point of view. Caffarelli and Silvestre \cite{Caffarelli-Silvestre} gave  a construction for the standard fractional Laplacian $(-\Delta_{\mathbb R^n})^s$ as a  Dirichlet-to-Neumann
operator of a uniformly degenerate elliptic boundary value problem. In the manifold case, Chang and the author \cite{Chang-Gonzalez} related the original definition of the conformal fractional Laplacian coming from scattering theory to a Dirichlet-to-Neumann operator for a related elliptic extension problem, thus allowing for an analytic treatment of
 Yamabe-type problems in the non-local setting (\cite{Gonzalez-Qing}).

 The fractional Yamabe problem, proposed in \cite{Gonzalez-Qing} poses the question of finding a constant fractional curvature metric in a given conformal class. In the simplest case, the resulting (non-local) PDE is
 \begin{equation}\label{equation-Laplacian}
(-\Delta)^s u=cu^{\frac{n+2s}{n-2s}}\quad\text{in } \mathbb R^n, \quad u>0.
 \end{equation}
 The underlying idea is to pass to the extension, looking for a solution of a (possibly degenerate) elliptic equation with a nonlinear boundary reaction term, which can be handled through a variational argument where the main difficulty is the lack of compactness. As in the usual Yamabe problem, the proof is divided into several cases; some of them still remain open.

From the geometric point of view, the fractional Yamabe problem is a generalization of Escobar's classical problem \cite{Escobar} on the construction of a constant mean curvature metric on the boundary of a given manifold, and in the particular case $s=1/2$ it reduces to it modulo some lower order error terms.\\

 We turn to examples in Sections \ref{section:sphere} and \ref{section:cylinder}. As the standard fractional Laplacian $(-\Delta_{\mathbb R^n})^s$, that can be characterized in terms of a Fourier symbol or, equivalently, as a singular integral with a convolution kernel, the conformal fractional Laplacian on the sphere $\mathbb S^n$ and on the cylinder $\mathbb R \times \mathbb S^{n-1}$ may be defined in both ways.

 Thus we first review the classical construction for the conformal fractional Laplacian on the sphere coming from representation theory, which yields its Fourier symbol, and then prove some new results on the characterization of this operator using only stereographic projection from $\mathbb R^n$. We show, in particular, a singular integral formulation for $P_s^{\mathbb S^n}$ that resembles the classical formula for the standard fractional Laplacian.

  Next we follow a parallel construction for the cylinder, recalling the results of \cite{DelaTorre-Gonzalez,DelaTorre-DelPino-Gonzalez-Wei}. More precisely, we give the explicit formula for the conformal fractional Laplacian on the cylinder in terms of its Fourier symbol, and then, a singular integral formula for a convolution kernel.

 This second example is interesting because it is the natural geometric characterization of an isolated singularity for the fractional Laplacian
 \begin{equation}\label{isolated-singularity-introduction}
\left\{\begin{split}
&(-\Delta)^{s}u=c\,u^{\frac{n+2s}{n-2s}}\text{ in }\mathbb R^n \setminus \{0\},\quad u>0,\\
&u(x)\to \infty \text{ as }|x|\to 0.
\end{split}\right.
\end{equation}
Radially symmetric solutions for \eqref{isolated-singularity-introduction} have been constructed in \cite{DelaTorre-DelPino-Gonzalez-Wei} and are known as  Delaunay solutions for the fractional curvature, since they generalize the classical construction of radially symmetric constant mean curvature surfaces \cite{Delaunay,Eells}, or radially symmetric constant scalar curvature surfaces \cite{Korevaar-Mazzeo-Pacard-Schoen,Schoen:notas}.

In addition, the cylinder is the simplest example of a non-compact manifold where the conformal fractional Laplacian may be constructed. However, being a non-local operator, it may not be well defined in the presence of general singularities. In Section \ref{section-noncompact} we give the latest development on this issue. This raises challenging questions in the area of nonlocal PDE and removability of singularities, with implications both in harmonic analysis and pseudo-differential operators.\\

Then, in Section \ref{section:uniqueness}, we consider the issue of uniqueness. Since the conformal fractional Laplacian is defined on the boundary of a Poincar\'e-Einstein manifold, it will depend on this filling. We will review here the well known
construction of two different Poincar\'e-Einstein fillings for the same boundary manifold \cite{Hawking-Page}. Unfortunately, we have not been able to find an explicit expression for the corresponding operators  $P_s^i$, $i=1,2$.\\

Our last Section \ref{section:hypersurface} is of independent interest. It is motivated by the following question: given a smooth domain $\Omega$ in $\mathbb R^{n+1}$, is there a canonical way to define the conformal fractional Laplacian on $M=\partial\Omega$ using only the information on the Euclidean metric in $\Omega$? More generally, what are the (extrinsic) conformal invariants for a hypersurface $M^n$ of $X^{n+1}$? Some invariants  have been very recently constructed in \cite{Graham:singular-Yamabe,Gover-Waldron:conformal-hypersurface-geometry,
Gover-Waldron:renormalized-volume}; these resemble the conformal non-local quantities we have defined on the boundary of a Poincar\'e-Einstein manifold but the new approach is much more general and applies to any embedded hypersurface.

In particular, when $M$ is a surface in Euclidean 3-space, one recovers the Willmore invariant with this construction (the interested reader may look at the survey \cite{CodaMarques-Neves:survey} for the latest development on the Willmore conjecture), so an interesting consequence is that one produces new (extrinsic) conformal invariants for hypersurfaces in higher dimensions that generalize the Willmore invariant of a two-dimensional surface.\\

We conclude this introduction with some remarks on further generalizations to other geometries. First, note that the formulas for the conformal fractional Laplacian in the sphere case as the boundary of the Poincar\'e ball have been long known in the representation theory community. They arise from the theory of joint eigenspaces in symmetric spaces, since the Poincar\'e model for hyperbolic space is the simplest example of a non-compact symmetric space of rank one. But there are other examples of rank one symmetric spaces: the complex hyperbolic space, that yields the CR fractional Laplacian on the Heisenberg group  or the quaternionic hyperbolic space \cite{Biquard:libro,Frank-Gonzalez-Monticelli-Tan}.

On the contrary, the picture is more complex in the higher rank case, and it is related to the theory of quantum $N$-body scattering (see \cite{Mazzeo-Vasy:symmetric-spaces} and related references). In the forthcoming paper \cite{Gonzalez-Saez} we aim to provide an analytical formulation for this problem without the represention theory machinery, when possible. The idea is to construct conformally covariant non-local operators on the boundary $M$ of a higher rank symmetric space $X^{n+k}$, which is a submanifold of codimension $k>1$. Analytically, the difficulties come from considering boundary value problems for systems of (possibly degenerate) linear partial differential equations with regular singularities (\cite{KKMOOT}).

Many of the ideas above  still hold if one switches from Riemannian to Lorentzian geometry. In particular, the conformal fractional Laplacian becomes the conformal wave operator, and one needs to move from elliptic to dispersive machinery. The papers \cite{Vasy:AdS,Enciso-Gonzalez-Vergara} provide a first approach to this setting, but many open questions still remain.

% ====================================================================================================
% ====================================================================================================

\section{Scattering theory and the conformal fractional Laplacian}
\label{section:scattering}

We first provide the general geometric setting for our construction and, in particular, the definition of a Poincar\'e-Einstein filling.

Let $X^{n+1}$ be (the interior of) a smooth Riemannian manifold of dimension $n+1$ with compact boundary
$\partial X = M^n$. A function $\rho$ is a \emph{defining function}
of $M$ in $X$ if
$$
\rho>0 \mbox{ in } X, \quad \rho=0 \mbox{ on }\partial X, \quad
d\rho\neq 0 \mbox{ on } \partial X.
$$
We say that a metric $g^+$ is \emph{conformally compact} if the new metric
$$\bar g := \rho^2g^+$$ extends smoothly to $\overline X$ for a defining function $\rho$ so
that $(\overline X,\bar g)$ is a compact Riemannian manifold. This
induces a conformal class of metrics $[h]$  on
$M$ for $h = \bar g|_{TM}$ as the defining function varies. $(M^n,[h])$  is called the
\emph{conformal infinity} and  $(X^{n+1},  g^+)$ the ambient manifold or filling.

A metric $g^+$ is
said to be \emph{asymptotically hyperbolic} if it is conformally
compact and the sectional curvature approaches to $-1$ at the conformal infinity, which is equivalent to saying that $|d\rho|_{\bar g}\to 1$. A more restrictive condition is to demand that $g^+$ is conformally compact Einstein (\emph{Poincar\'e-Einstein}), i.e., it is conformally compact and its Ricci tensor satisfies
$$Ric_{g^+}=-n g^+.$$

Given a representative $h$ of the conformal infinity $(M, [h])$,
there is a unique geodesic defining function $\rho$ such that, on
a neighborhood $M \times (0,\delta)$ in $X$, $g^+$ has the normal form
\begin{equation}
\label{normal-form}g^+ = \rho^{-2}(d\rho^2 + h_\rho)
\end{equation}
 where
$h_\rho$ is a one parameter family of metrics on $M$ such that $h_\rho|_{\rho=0}=h$ (\cite{Graham:volume-area}). In the following, we will always assume that the defining function for the problem is chosen so that the metric $g^+$ is written in normal form once the representative $h$ of the conformal infinity  is fixed. $\bar g$ will be always defined with respect to this defining function.\\

Let $(X,g^+)$ be Poincar\'e-Einstein manifold with conformal infinity $(M,[h])$. The \emph{conformal fractional Laplacian} $P_s^h$ is a nonlocal operator on $M$ which is constructed as the Dirichlet-to-Neumann operator for a generalized eigenvalue problem on $(X,g^+)$, that we describe next. Classical references are \cite{Mazzeo-Melrose,Graham-Zworski,Juhl,Guillarmou}, for instance.

The spectrum of the Laplacian $-\Delta_{g^+}$ of an asymptotically hyperbolic manifold is well known (\cite{Mazzeo-Melrose,Mazzeo:spectrum}). More precisely, it consists of the interval $\left[n^2/4,\infty\right)$ and a finite set of $L^2$-eigenvalues contained in $\left(0,n^2/4\right)$. Traditionally one writes the spectral parameter as $\sigma(n-\sigma)$; in the rest of the paper we will always assume that this value it is not an $L^2$-eigenvalue. Then, for  $\sigma\in\mathbb C$ with $\re(\sigma) >  n/2$ and such that $\sigma\not\in n/2+\mathbb N$, for each Dirichlet-type data $u\in \mathcal C^\infty(M)$, the generalized
eigenvalue problem \be\label{equation-GZ}
-\Delta_{g^+}w-\sigma(n-\sigma)w=0\quad\mbox{in } X \ee has a solution of the
form \be\label{general-solution} w = U \rho ^{n-\sigma} + \tilde U\rho^\sigma,\quad
U,\tilde U\in \mathcal C^\infty(\overline{X}),\quad U|_{\rho=0}=u. \ee
Fixed $s\in(0,n/2)$, $s\not\in \mathbb N$, and $\sigma=n/2+s$ as above, the \emph{conformal fractional Laplacian} on $M$ with respect to the metric $h$ is defined as the normalized scattering operator
\begin{equation}\label{normalized-scattering}
P^h_{s} u= d_s\,\tilde U|_{\rho=0},
\end{equation}
for the constant
\begin{equation}\label{normalization-constant}d_s=2^{2s}\frac{\Gamma(s)}{\Gamma(-s)},\end{equation}
where $\Gamma$ is the ordinary Gamma function.

Remark here that the operator $P_s^h$ is non-local, since it depends on the extension metric $g^+$ even if we do not indicate it explicitly. For the rest of this paper we will always assume that a background metric $g^+$ has been fixed.

The main properties of the conformal fractional Laplacian are summarized in the following:
\begin{itemize}
\item[\emph{i.}] $P^h_s$ is a self-adjoint pseudo-differential operator on $M$ with principal symbol the same as $(-\Delta_{h})^{s}$, i.e.,
   $$P_s^h\in (-\Delta_h)^s+\Psi_{s-1},$$
   where $\Psi_{l}$ is the set of pseudo-differential operators of loss $l$.

   \item[\emph{ii.}] In the case that $M=\mathbb R^n$ with the Euclidean metric $|dx|^2$ and its canonical extension to $\mathbb R^{n+1}_+$, all the curvature terms vanish and
       $$P_s^{\mathbb R^n}=(-\Delta_{\mathbb R^n})^s,$$
       i.e., we recover the classical fractional Laplacian.

\item[\emph{iii.}] $P_s^h$ is a conformally covariant operator, in the sense that under the conformal change of metric
$$h_u=u^{\frac{4}{n-2s}}h,\quad u>0,$$
it satisfies the transformation law
\begin{equation}\label{conformal-covariance}P_s^{h_u}(\cdot)=
u^{-\frac{n+2s}{n-2s}} \,P_s^{h} (u\,\cdot\,).\end{equation}
\end{itemize}

The \emph{fractional order curvature} of the metric $h$ on $M$ associated to the conformal
fractional Laplacian $P_s^{h}$ is defined as
$$Q_s^{h} = P_s^{h}(1),$$
although note that other authors use a different normalization constant.
From the above relation
\eqref{conformal-covariance} we obtain the  curvature equation
\be\label{curvature-equation}
P_{s}^{h}(u)= Q_s^{h_u}\,u^{\frac{n+2s}{n-2s}}\quad \text{in }M^n, \ee
which is a non-local semilinear equation with critical power nonlinearity generalizing \eqref{equation-Laplacian} to the curved case.

One of the main observations is that the $P_s^h$ constitute a one-parameter meromorphic family of conformally covariant operators on $M$, for $s\in(0,n/2)$, $s\not\in \mathbb N$. At the integer powers, the conformal $s$-Laplacian can be constructed  by a residue formula thanks to the normalization constant \eqref{normalization-constant} (see \cite{Graham-Zworski}). In addition, when $s$ is a positive integer, $P^h_s$ is a local operator that coincides with the classical GJMS operator from \cite{GJMS,Fefferman-Graham}. In particular:
\begin{itemize}
\item For $s = 1$, $P_1$ is precisely the conformal Laplacian defined in \eqref{conformal-Laplacian}, i.e.,
\begin{equation}\label{conformal-Laplacian1}
P_1^{h} = L_h=-\Delta_{h} + \tfrac {n-2}{4(n-1)}R_h,
\end{equation}
and the associated curvature is a multiple of the scalar curvature
$$Q_1^{h} = \tfrac {n-2}{4(n-1)} R_h.$$

\item For $s=2$, the conformal fractional Laplacian coincides with the well known Paneitz operator (\cite{Paneitz})
$$P_2^{h}=(-\Delta_{h})^2+\delta(a_nR_{h}+b_nRic_{h})d+\tfrac{n-4}{2}\,Q^h_{2},$$
and $Q_2$ is (up to multiplicative constant) the so-called fourth order $Q$-curvature.
\end{itemize}
For any other powers $s\not\in \mathbb N$, $P^h_s$ is a non-local operator on $M$ and reflects the geometry of the filling $(X,g^+)$. Some explicit examples will be considered in Sections \ref{section:sphere} and \ref{section:cylinder}.\\

We also remark that the same construction is true for a general asymptotically hyperbolic manifold, except for values $s\in\mathbb N/2$, unless the expansion of the term $h_\rho$ in the normal form \eqref{normal-form} is even up to a suitable order \cite{Graham-Zworski,Guillarmou}. In this exposition we will explain in detail only the case $s\in(0,1)$, where we will explain the role played by mean curvature (see Theorem \ref{thm-extension2} below).

\section{The extension and the $s$-Yamabe problem}\label{section:extension}

It was observed in \cite{Chang-Gonzalez} (see also
\cite{Case-Chang} for the most recent development)
that the generalized eigenvalue problem
\eqref{equation-GZ}-\eqref{general-solution} on $(X, g^+)$ is
equivalent to a linear degenerate elliptic problem on the compactified
manifold $(\overline X, \bar g)$. Hence
they reconciled the definition of the conformal fractional
Laplacian $P_s^h$ given  in the previous section as the normalized scattering operator and
the one given in the spirit of the Dirichlet-to-Neumann operators by
Caffarelli and Silvestre in \cite{Caffarelli-Silvestre}.

In this section we will assume that $s\in(0,1)$.  For higher powers $s>1$ we refer to \cite{Case-Chang}, \cite{Yang} and \cite{Chang-Yang}. As in the introduction chapter, we set $a=1-2s$. $W^{1,2}(X,\rho^a)$ will denote the weighted Sobolev space $W^{1,2}$ on $X$ with weight $\rho^a$.

\begin{thm}
[\cite{Chang-Gonzalez}]\label{thm-Chang-Gonzalez}
Let  $(X, g^+)$  be a Poincar\'e-Einstein manifold with conformal infinity $(M,[h])$.
Then, given $u\in \mathcal C^{\infty}(M)$, the generalized eigenvalue
problem \eqref{equation-GZ}-\eqref{general-solution} is equivalent
to the degenerate elliptic equation
\begin{equation}\label{div}\left\{\begin{split}
-\divergence \lp \rho^a \nabla U\rp + E(\rho) U &=0\quad \mbox{in }(X,\bar g), \\
U|_{\rho=0}&=u\quad \mbox{on }M,
\end{split}\right.\ee
where the derivatives are taken with respect to the original metric $\bar g$, and $U = \rho^{n/2-s} w$. The zero-th order term is
\begin{equation*}\label{E1} E(\rho)= \rho^{-1-\sigma} \lp -\Delta_{g^+} -
\sigma(n-\sigma)\rp\rho^{n-\sigma}.\end{equation*}
Notice that, in a neighborhood
$M\times(0,\delta)$ where the metric $g^+$ is in normal form \eqref{normal-form}, this expression simplifies to
\be\label{E-normal-form} E(\rho)=\tfrac{n-1+a}{4n}  R_{\bar g}
\rho^a. \ee
Such $U$ is the unique minimizer of the
energy
$$F[V] = \int_X \rho^a |\nabla V|_{\bar g}^2\,dv_{\bar g} + \int_X E(\rho)|V|^2\,dv_{\bar g}$$
among all the functions $V\in W^{1,2}(X,\rho^a)$ with fixed trace
$V|_{\rho=0}=u$.
Moreover, we recover the conformal fractional Laplacian on $M$ as
\begin{equation*}
P_s^{h} u=
-d^*_s\lim_{\rho\to 0}\rho^a \partial_\rho U,
\end{equation*}
where
\begin{equation*}d^*_s =-
\frac{2^{2s-1}\Gamma(s)}{s\Gamma(-s)}.
\end{equation*}
\end{thm}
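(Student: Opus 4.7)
The plan is to translate the scattering equation on $(X,g^+)$ into an equation on the compactified manifold $(\overline X,\bar g)$ via the substitution $w=\rho^{n-\sigma}U$, so that the leading behavior $w\sim u\rho^{n-\sigma}$ becomes the clean Dirichlet condition $U|_{\rho=0}=u$. The starting point is the conformal transformation law for the Laplacian, which for $g^+=\rho^{-2}\bar g$ on $X^{n+1}$ reads
\[\Delta_{g^+}w=\rho^2\Delta_{\bar g}w-(n-1)\rho\,\bar g(\nabla\rho,\nabla w).\]
Plugging in $w=\rho^{\gamma}U$ with $\gamma=n-\sigma=n/2-s$ and expanding gives a linear second-order PDE for $U$ with explicit coefficients involving $\rho$, $|\nabla\rho|_{\bar g}^2$ and $\Delta_{\bar g}\rho$.

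The next step is the algebraic simplification, which I would carry out in the normal form $g^+=\rho^{-2}(d\rho^2+h_\rho)$ where $|\nabla\rho|_{\bar g}^2\equiv 1$. Two cancellations happen. First, the spectral parameter is chosen exactly so that $\gamma(\gamma-n)+\sigma(n-\sigma)=0$, which makes the zeroth-order coefficient of $U$ collapse to $\gamma\rho\,\Delta_{\bar g}\rho$. Second, the coefficient of the first-order term $\rho\,\bar g(\nabla\rho,\nabla U)$ reduces to $2\gamma-(n-1)=1-2s=a$. After dividing by $\rho^{\gamma+1}$ and multiplying through by $\rho^{a-1}$, the derivative terms $\rho^a\Delta_{\bar g}U+a\rho^{a-1}\bar g(\nabla\rho,\nabla U)$ collapse into $\divergence(\rho^a\nabla U)$, yielding the degenerate elliptic form $-\divergence(\rho^a\nabla U)+E(\rho)U=0$ with $E(\rho)=-\gamma\rho^{a-1}\Delta_{\bar g}\rho$.

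To identify $E(\rho)$ with the stated expression I would then invoke the Poincar\'e-Einstein hypothesis: tracing $\Ric_{g^+}=-ng^+$ gives $R_{g^+}=-n(n+1)$, and combining with the conformal scalar curvature formula
\[R_{g^+}=\rho^2 R_{\bar g}+2n\rho\,\Delta_{\bar g}\rho-n(n+1)|\nabla\rho|_{\bar g}^2\]
yields $\Delta_{\bar g}\rho=-\rho R_{\bar g}/(2n)$ in normal form. Substituting back and using $\gamma=(n-1+a)/2$ produces precisely $E(\rho)=\tfrac{n-1+a}{4n}R_{\bar g}\rho^a$. The variational characterization is then standard: since $a\in(-1,1)$ the weight $\rho^a$ is Muckenhoupt $A_2$, the space $W^{1,2}(X,\rho^a)$ admits a bounded trace onto $M$, and $F[V]$ is strictly convex and coercive on the affine subspace $\{V:V|_{\rho=0}=u\}$, so its unique minimizer solves the Euler-Lagrange equation, which is exactly \eqref{div}.

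Finally, to recover $P_s^h u$ from the weighted Neumann data I would use the two-term expansion $U=u+\rho^{2s}\,\tilde U|_{\rho=0}+o(\rho^{2s})$ that follows from $w=U\rho^{n-\sigma}+\tilde U\rho^\sigma$ and $2\sigma-n=2s$; the absence of intermediate odd powers is where the Poincar\'e-Einstein (even expansion of $h_\rho$) hypothesis enters. Differentiating gives $\rho^a\partial_\rho U=2s\,\tilde U|_{\rho=0}+o(1)$ as $\rho\to 0$, and matching with the scattering definition \eqref{normalized-scattering} forces $d_s^*=-d_s/(2s)$, which agrees with $-2^{2s-1}\Gamma(s)/(s\,\Gamma(-s))$ using $d_s=2^{2s}\Gamma(s)/\Gamma(-s)$. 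I expect the main obstacle to be the bookkeeping of paragraph two: cleanly separating the cancellations that come from the specific choice of exponents from those that use the Einstein condition, since it is this latter set that would break (introducing genuine error terms in $E(\rho)$) on a merely asymptotically hyperbolic filling.
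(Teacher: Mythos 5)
Your proposal is correct and follows essentially the same path as the Chang--Gonz\'alez proof: conformal transformation law $\Delta_{g^+}w=\rho^2\Delta_{\bar g}w-(n-1)\rho\,\bar g(\nabla\rho,\nabla w)$, substitution $w=\rho^{\gamma}U$ with $\gamma=n-\sigma$, cancellation of the zeroth-order $\rho^{\gamma}U$ terms from the choice $\gamma(\gamma-n)=-\sigma(n-\sigma)$, collapse of the first-order coefficient $2\gamma-(n-1)=a$ into $\divergence(\rho^a\nabla U)$, and identification of $E(\rho)=-\gamma\rho^{a-1}\Delta_{\bar g}\rho$ with $\tfrac{n-1+a}{4n}R_{\bar g}\rho^a$ via $R_{g^+}=-n(n+1)$ and the conformal scalar-curvature identity. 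The recovery of $d_s^*=-d_s/(2s)$ from the Taylor expansion and the remark that the absence of the $\rho^1$ term (hence convergence of $\rho^a\partial_\rho U$ for $s\geq 1/2$) is exactly where the Poincar\'e--Einstein evenness enters are both right, and correctly anticipate Theorem \ref{thm-extension2}. One point you gloss over: coercivity of $F[V]$ is not automatic from the $A_2$ property of $\rho^a$ alone, since $E(\rho)$ has no definite sign; it relies on the spectral assumption $\lambda_1(-\Delta_{g^+})>n^2/4-s^2$, which the text records (after the theorem statement) as being necessary throughout. Also note the paper's displayed $U=\rho^{n/2-s}w$ has the sign of the exponent reversed relative to what is needed for $U|_{\rho=0}=u$; your $w=\rho^{\gamma}U$, i.e.\ $U=\rho^{s-n/2}w$, is the correct reading.
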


Before we continue with the exposition, let us illustrate these concepts with the simplest example of a Poincar\'e-Einstein manifold: the hyperbolic space $\mathbb H^{n+1}$. It can be
characterized as the upper half-space $\mathbb R^{n+1}_+$ (with coordinates $x\in\mathbb
R^n$, $y\in \mathbb R_+$), endowed with the  metric
$$g^+=\frac{dy^2+|dx|^2}{y^2}.$$
In this case, $y$ is a defining function and the conformal infinity $\{y=0\}$ is just the Euclidean space $\mathbb R^n$ with its flat metric $|dx|^2$.
Then problem \eqref{div} with Dirichlet condition $u$ reduces to
\begin{equation}\label{div-Euclidean}\left\{\begin{split}
-\divergence \lp y^a \nabla U\rp &=0 \quad\mbox{in }\mathbb R^{n+1}_+,\\
U|_{y=0}&=u \quad\mbox{on }\mathbb R^n,
\end{split}\right.\end{equation}
and the fractional Laplacian at the boundary $\mathbb R^n$ is just
$$
P_s^{\mathbb R^n} u = (-\Delta_{\mathbb R^n})^s u = -
d_s^*\,\lim_{y\to 0} \lp y^a \partial_y U\rp,
$$
which is precisely the usual construction for the fractional Laplacian as a Dirichlet-to-Neumann operator from \cite{Caffarelli-Silvestre}. We note that it is possible to write $U=P *_x u$, where $P$ is the Poisson kernel for this extension problem as given in the introduction chapter.\\

If the background manifold $(X,g^+)$ is not Poincar\'e-Einstein, but only asymptotically hyperbolic, we have a similar extension problem but here the mean curvature of the boundary $M$
respect to the metric $\bar g$ in $\overline X$, denoted by $H$,  plays an essential role:

\begin{thm}[\cite{Chang-Gonzalez}]\label{thm-extension2}
Let $(X^{n+1},\bar g^+)$ be an asymptotically hyperbolic manifold with a geodesic defining function $\rho$ and conformal infinity $(M^n,[h])$, with the metric written in normal form \eqref{normal-form}. Then the conformal fractional Laplacian can be constructed through the following extension problem:  for each given smooth function $u$ on $M$, consider
\begin{equation*}\left\{\begin{split}
-\divergence(\rho^a\nabla U)+E(\rho)  U&=0\quad\mbox{in }(\bar X,\bar g), \\
U&=u \quad \mbox{on }M,
\end{split}\right.\end{equation*}
where
\begin{equation}\label{E-general} E(\rho)=\tfrac{n-1-a}{4n} \left[ R_{\bar g}
- \left\{n(n+1) + R_{g^+}\right\}\rho^{-2} \right] \rho^a.\end{equation}
Then there exists a unique solution $U$ and moreover,
\begin{enumerate}
\item For $s\in(0,\frac{1}{2})$,
\be\label{compute-fractional}P_s^h u=-{d_s^*}\lim_{\rho\to 0}\rho^a \partial_\rho U,\ee

\item For $s=\frac{1}{2}$, we have an extra term
$$P_{\frac{1}{2}}^h u=\lim_{\rho\to 0} \partial_\rho U + \tfrac{n-1}{2} H u.$$

\item If $s\in\lp\frac{1}{2},1\rp$, the limit in the right hand side of \eqref{compute-fractional} exists if and only if the the mean curvature $H$ vanishes identically, in which case, \eqref{compute-fractional} holds too.
\end{enumerate}
\end{thm}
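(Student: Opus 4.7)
The plan is to adapt the derivation of Theorem \ref{thm-Chang-Gonzalez} to the asymptotically hyperbolic setting and then perform a careful boundary expansion of the solution $U$, branching on the size of $a=1-2s$.

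First, I would derive the extension equation and establish existence and uniqueness. Starting from the generalized eigenvalue equation $-\Delta_{g^+}w-\sigma(n-\sigma)w=0$ with $\sigma=n/2+s$, the substitution $U=\rho^{s-n/2}w$ combined with the conformal transformation law for the Laplacian under $\bar g=\rho^2 g^+$ produces the divergence-form equation. In the Poincar\'e-Einstein case the Einstein condition forces $R_{g^+}\equiv-n(n+1)$ and the scalar-curvature correction collapses to \eqref{E-normal-form}; in the asymptotically hyperbolic case one only knows $R_{g^+}+n(n+1)=O(\rho)$, so the additional term $-(n(n+1)+R_{g^+})\rho^{-2}$ in \eqref{E-general} survives but remains bounded, keeping $E(\rho)=O(\rho^a)$. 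This integrability is exactly what is needed to run the variational argument of Theorem \ref{thm-Chang-Gonzalez} on $W^{1,2}(X,\rho^a)$ with fixed trace $u$, yielding a unique minimizer $U$.

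Second, I would compute the boundary expansion. Inserting the ansatz
\begin{equation*}
U=u_0+\rho u_1+\rho^2 u_2+\cdots+\rho^{2s}(\tilde u_0+\rho\tilde u_1+\cdots)
\end{equation*}
into the divergence-form equation and collecting powers of $\rho$ gives an indicial recursion. The leading coefficient is $u_0=u$, and the coefficient $\tilde u_0$ of the ``anomalous'' branch $\rho^{2s}$ is, by construction, proportional to $P_s^h u$ via the normalized scattering definition \eqref{normalized-scattering}. The crucial new ingredient is the first odd coefficient $u_1$: using the normal form \eqref{normal-form} with $h_\rho=h+\rho h^{(1)}+\cdots$ and the identity $H=-\tfrac{1}{2n}\mathrm{tr}_h(h^{(1)})$ for the mean curvature of $M$ in $(\bar X,\bar g)$, the indicial equation at order $\rho^{a-1}$ forces $u_1=c_{n,s}\,Hu$ for an explicit constant $c_{n,s}$.

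Third, I would read off the three cases from
\begin{equation*}
\rho^a\partial_\rho U=u_1\rho^a+2u_2\rho^{a+1}+\cdots+2s\,\tilde u_0+O(\rho).
\end{equation*}
For $s\in(0,1/2)$ one has $a>0$, so every polynomial contribution tends to zero and only $2s\tilde u_0$ remains, reproducing \eqref{compute-fractional} after matching the constants $d_s$ and $d_s^*$. For $s=1/2$ one has $a=0$, so the limit becomes $u_1+2s\tilde u_0$, and fixing $c_{n,1/2}$ yields exactly the extra boundary term $\tfrac{n-1}{2}Hu$. For $s\in(1/2,1)$, $a<0$, so the term $u_1\rho^a$ blows up unless $u_1\equiv0$; by the previous step this is equivalent to $H\equiv0$, and in that case the argument of the first case goes through verbatim.

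The main obstacle is the second step: producing the precise constant $c_{n,s}$ and then checking that at $s=1/2$ it matches $(n-1)/2$ requires carefully tracking the conformal transformation of $R_{g^+}$ against the odd-order contributions from $h^{(1)}$ in the expansion, and the two sources interact nontrivially when $a=0$. A secondary technical point is upgrading the formal series to a genuine asymptotic expansion of the variational solution $U$; for this I would invoke interior and boundary regularity for degenerate elliptic equations with $A_2$ weights, together with the Frobenius-type indicial analysis already used in the Poincar\'e-Einstein case of Theorem \ref{thm-Chang-Gonzalez}.
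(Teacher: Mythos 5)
The paper does not reproduce a proof of this theorem; it is stated as a citation to \cite{Chang-Gonzalez}, and your outline — conformal rewriting of the scattering equation into divergence form, variational existence of $U$, and a Frobenius-type boundary expansion producing the trichotomy in $s$ via the first odd coefficient $u_1$ — is indeed the route taken there. The structural claims are all right: $u_1$ is proportional to $Hu$, and this term survives, cancels at threshold, or diverges according as $a>0$, $a=0$, $a<0$.

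There is, however, a concrete slip in your first step that would derail the second if taken literally. You assert that $(n(n+1)+R_{g^+})\rho^{-2}$ ``remains bounded'' and hence $E(\rho)=O(\rho^a)$. In fact, with a geodesic defining function one has $|d\rho|_{\bar g}\equiv 1$ near $M$, and the conformal transformation of scalar curvature gives $R_{g^+}+n(n+1)=\rho^2 R_{\bar g}+2n\rho\,\Delta_{\bar g}\rho$, whose leading coefficient is a nonzero multiple of $H$ when $M$ is not minimal. Therefore $(n(n+1)+R_{g^+})\rho^{-2}=O(\rho^{-1})$ and $E(\rho)=cH\rho^{a-1}+O(\rho^a)$, which is strictly more singular than the Poincar\'e--Einstein case. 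Far from being a nuisance to be absorbed, this $\rho^{a-1}$ term is precisely the source of the relation $u_1=c_{n,s}Hu$: the only two contributions at order $\rho^{a-1}$ are $-a\,u_1$ from $-\partial_\rho(\rho^a\partial_\rho U)$ and the $H$-term from $E(\rho)U$, since the $\partial_\rho\log\sqrt{\det h_\rho}$ correction to the divergence only enters at order $\rho^a$ and higher. Your claimed indicial equation cannot be reached if one believes $E(\rho)=O(\rho^a)$, so the two halves of your argument are mutually inconsistent as written. Once you correct the order of $E(\rho)$, the derivation of $u_1$ goes through for $a\neq 0$, and the degeneracy of the indicial root at $a=0$ (where $2s=1$ coincides with the smooth branch) is exactly the resonance you already flagged; the variational well-posedness also needs a weighted Hardy inequality to control $\int\rho^{a-1}|V|^2$ by $\int\rho^a|\nabla V|^2$ rather than the plain integrability you invoked.
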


\begin{remark}
Note that, in the particular case that $s=1/2$, the fractional curvature $Q_{1/2}$ reduces to the mean curvature of $M$ (up to multiplicative constant).
\end{remark}

This dichotomy in Theorem \ref{thm-extension2} due to the presence of mean curvature also appears in many other non-local problems such as \cite{Caffarelli-Souganidis:convergence-nonlocal,Gonzalez:Gamma-convergence,Savin-Valdinoci}. The underlying idea is that, for $s\in(0,1/2)$, non-local curvature is the essential term, but for $s\in(1/2,1)$, mean curvature takes over.\\

Our next objective is, in the Poincar\'e-Einstein case, to compare the geometric extension \eqref{div} to the Euclidean one \eqref{div-Euclidean}. It was observed in \cite{Chang-Gonzalez} that it is possible to find a special defining function $\rho^*$ such that when we rewrite the scattering equation \eqref{div} for the new metric $\bar g^*=(\rho^*)^2 g^+$, the lower order term $E(\rho^*)$ vanishes; thus making the extension as close as possible to the Euclidean one. This construction was inspired in the special defining function of \cite{Lee:spectrum} and is also an essential ingredient in the formulation of the scattering problem in the metric measure space setting of \cite{Case-Chang}, which is a very interesting development for which we do not have space here.

As it was pointed out in \cite{Case-Chang}, it is necessary to assume, here and in the rest of this exposition, that the first eigenvalue for $-\Delta_{g^+}$ satisfies  $\lambda_1(-\Delta_{g^+})>\frac{n^2}{4}-s^2$.  We arrive at the following
improvement of Theorem \ref{thm-Chang-Gonzalez}:

\begin{prop}[\cite{Chang-Gonzalez,Case-Chang}]\label{new-defining-function}
Let $w^*$ be the solution to \eqref{equation-GZ}-\eqref{general-solution} with Dirichlet data $u\equiv 1$, and set $\rho^* = (w^*)^\frac 1{n/2-s}$. The function $\rho^*$ is a defining function of $M$ in $X$ such that
$E(\rho^*)\equiv 0$. Moreover, it has the asymptotic expansion near the conformal infinity
$$\rho^*(\rho)=\rho\left[ 1+\frac{Q^{h}_s}{(n/2-s)d_s}  \rho^{2s}+O(\rho^2)\right].$$
By construction, if $U^*$ is the solution to
\begin{equation*}\label{new-extension}\left\{
\begin{split}
-\divergence \lp (\rho^*)^a \nabla U^*\rp&=0 \quad \mbox{in }(X, \bar g^*),\\
U^*&=u \quad\mbox{on }M,
\end{split}\right.\end{equation*}
with respect to the metric $\bar g^*=(\rho^*)^2 g^+$, then
 \begin{equation*}\label{property10} P_s^{h} u=
-d^*_s \lim_{{\rho^*}\to 0} (\rho^*)^a\,\partial_{\rho^*} U^*+u\,
Q_s^{h}.\end{equation*}
\end{prop}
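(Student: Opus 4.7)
Let $w^{(u)}$ denote the unique scattering solution to \eqref{equation-GZ}--\eqref{general-solution} with Dirichlet data $u$, so that $w^* = w^{(1)}$; by the very definition of $Q_s^h = P_s^h(1)$ through \eqref{normalized-scattering}, the expansion of $w^*$ reads $w^* = \rho^{n/2-s}(U^{(1)} + \tilde U^{(1)} \rho^{2s})$ with $U^{(1)}|_0 = 1$ and $\tilde U^{(1)}|_0 = Q_s^h/d_s$. The spectral assumption $\lambda_1(-\Delta_{g^+}) > n^2/4 - s^2$ makes the Rayleigh quotient of \eqref{equation-GZ} coercive, so $w^* > 0$ throughout $X$ (the positive boundary data propagates via the maximum principle). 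Hence $\rho^* := (w^*)^{1/(n/2-s)}$ is smooth, positive, and satisfies $\rho^*/\rho \to 1$ at $M$; in particular it is a defining function. Taylor-expanding the power and using that the Poincar\'e--Einstein normal form kills the $O(\rho)$ term in the expansion of $U^{(1)}$ yields the stated asymptotics for $\rho^*$. Moreover, $E(\rho^*) \equiv 0$ follows at once from \eqref{E1}, since $(\rho^*)^{n-\sigma} = w^*$ solves \eqref{equation-GZ}.

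The conjugation $w = \rho^{n/2-s} U$ that transforms \eqref{equation-GZ} into the divergence-form extension of Theorem~\ref{thm-Chang-Gonzalez} is meaningful for any defining function; applied with $\rho^*$ and $\bar g^* = (\rho^*)^2 g^+$, it shows that $U^* = w^{(u)}/w^*$ is the unique solution of the asserted boundary-value problem on $(X, \bar g^*)$, with trace $u$ at $M$ (both $w^{(u)}$ and $w^*$ carry the same leading factor $\rho^{n/2-s}$). Writing $w^{(u)} = \rho^{n/2-s}(U + \tilde U \rho^{2s})$ with $U|_0 = u$ and $\tilde U|_0 = P_s^h u / d_s$, expansion of the ratio gives
\begin{equation*}
U^* = \frac{U + \tilde U\rho^{2s}}{U^{(1)} + \tilde U^{(1)}\rho^{2s}} = u + \frac{P_s^h u - u\,Q_s^h}{d_s}\,\rho^{2s} + O(\rho^2) + O(\rho^{4s})
\end{equation*}
at the boundary, the $u\,Q_s^h$ contribution being the geometric-series correction produced by $\tilde U^{(1)}|_0 = Q_s^h/d_s$. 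Since $\rho^* = \rho(1 + O(\rho^{2s}))$, the leading $(\rho^*)^{2s}$ coefficient equals the same constant, so with $a = 1-2s$ one computes
\begin{equation*}
\lim_{\rho^* \to 0} (\rho^*)^a\, \partial_{\rho^*} U^* = \frac{2s}{d_s}\bigl(P_s^h u - u\,Q_s^h\bigr).
\end{equation*}
Using the identity $-d_s^* = d_s/(2s)$ and rearranging gives the desired formula.

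The subtle point—and what the $u\,Q_s^h$ correction is really recording—is that Theorem~\ref{thm-Chang-Gonzalez} cannot be invoked directly in the new metric to conclude $-d_s^*\lim = P_s^h u$: the scattering decomposition of a single solution $w^{(u)}$ takes different forms in different defining functions, and the new ``leading'' coefficient of $(\rho^*)^\sigma$ is $(P_s^h u - u\,Q_s^h)/d_s$, not $P_s^h u/d_s$, precisely because $\rho^*$ is not geodesic. Technically, the main obstacle is the case $s \in (1/2,1)$: the weight $(\rho^*)^a$ blows up at $M$, so one must exclude spurious contributions of order less than $\rho^{2s}$ from the expansions of $U$ and $U^{(1)}$. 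This is exactly where the Poincar\'e--Einstein hypothesis is essential, ensuring the formal series contain only even integer powers of $\rho$ up to the nonlocal $\rho^{2s}$ term and hence that the weighted limit is finite and computes what it should.
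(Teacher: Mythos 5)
Your proposal is correct and follows essentially the same route as the cited sources (Chang--Gonz\'alez and Case--Chang): use the scattering expansion of $w^{(1)}$ to read off the $\rho^{2s}$ coefficient of $\rho^*$, note that $E(\rho^*)\equiv 0$ because $(\rho^*)^{n-\sigma}=w^*$ solves the eigenvalue equation, identify $U^*=w^{(u)}/w^*$, and expand the ratio to extract the coefficient $(P_s^h u - u\,Q_s^h)/d_s$, finally converting via $-d_s^*=d_s/(2s)$. Your closing remarks correctly identify the two points that actually carry the argument — the appearance of $u\,Q_s^h$ as the price of using the non-geodesic defining function $\rho^*$, and the role of the Poincar\'e--Einstein evenness in ruling out a linear-in-$\rho$ term that would wreck the weighted limit for $s\in(1/2,1)$.
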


Next, we give an interpretation of the fractional curvature as a variation of weighted volume, in analogy to the usual mean curvature situation. The notion of renormalized volume was first investigated by the physicists in relation to the AdS/CFT correspondence, and was considered by  \cite{Fefferman-Graham:Q-curvature,Chang-Qing-Yang}. Given $(X^{n+1},g^+)$  a Poincar\'e-Einstein manifold with boundary $M^n$ and defining function $\rho$, one may compute the asymptotic expansion of the volume of the region $\{\rho>\epsilon\}$; the renormalized volume is defined as one very specific term in this asymptotic expansion.
When the dimension $n$ is odd, the renormalized volume is a conformal
invariant of the conformally compact structure, and it can be calculated as the conformal primitive of the $Q$-curvature coming from the scattering operator (this is the case $s=n/2$). In that case that $n$ is even, the picture is more complex, and one can show that the renormalized volume is one term of the Chern-Gauss-Bonnet formula in higher dimensions.

When $s\in(0,1)$ one can also give a weighted version for volume (see \cite{Gonzalez}), and to obtain the fractional curvature $Q_s$ as its first variation. More precisely, for each $\varepsilon>0$ we set
 \be\label{weighted-volume}vol_{g^+,s}(\{\rho>\varepsilon\}):=
\int_{\{\rho>\varepsilon\}}(\rho^*)^{\frac{n}{2}-s}\,dv_{g^+},\ee
where $\rho^*$ is the special defining function from  Proposition \ref{new-defining-function}.

\begin{prop}[\cite{Gonzalez}]
Let $(X, g^+)$ be a Poincar\'e-Einstein manifold with conformal infinity $(M,[h])$. The weighted volume \eqref{weighted-volume} has an asymptotic expansion in $\varepsilon$ when $\varepsilon \to 0$ given by
$$vol_{g^+,s}(\{\rho>\varepsilon\})=\varepsilon^{-\frac{n}{2}-s}\left[\lp\tfrac{n}{2}+s\rp^{-1}vol(M)+ \varepsilon^{2s} V_s^h +\mbox{ higher order terms}\right]$$
where
$$V_s^h:=\frac{1}{d_s}\frac{1}{n/2-s}\int_{M} Q_s^{h}\,dv_{h}.$$\\
\end{prop}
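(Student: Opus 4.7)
The strategy is to reduce the weighted volume integral to a one-dimensional integral along the $\rho$-direction, by combining the normal form of $g^+$ near the conformal infinity with the explicit asymptotic expansion of the special defining function $\rho^*$ supplied by Proposition \ref{new-defining-function}.

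First I would fix $\delta>0$ so small that $g^+=\rho^{-2}(d\rho^2+h_\rho)$ on the collar $M\times(0,\delta)$, and split the domain of integration as $\{\rho>\varepsilon\}=\{\varepsilon<\rho<\delta\}\cup\{\rho\geq\delta\}$. The contribution of the second piece is a finite constant $C_\delta$ independent of $\varepsilon$, which will eventually be absorbed into the ``higher order terms''. On the collar, the normal form gives $dv_{g^+}=\rho^{-n-1}\,d\rho\,dv_{h_\rho}$, and the Poincar\'e-Einstein hypothesis produces an even expansion $h_\rho=h+h^{(2)}\rho^2+O(\rho^4)$, so $dv_{h_\rho}=(1+O(\rho^2))\,dv_h$ pointwise on $M$.

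Next I would substitute the expansion of $\rho^*$ from Proposition \ref{new-defining-function} and raise it to the $(n/2-s)$-th power via the binomial formula; the factor $n/2-s$ cancels with the one in the denominator of $Q_s^h/((n/2-s)d_s)$, yielding
$$(\rho^*)^{n/2-s}\,dv_{g^+}=\rho^{-n/2-s-1}\left[1+\tfrac{Q_s^h}{d_s}\rho^{2s}+O(\rho^{\min\{2,4s\}})\right]d\rho\,dv_h.$$
The elementary one-dimensional integrals
$$\int_\varepsilon^\delta\rho^{-n/2-s-1}\,d\rho=\tfrac{\varepsilon^{-n/2-s}}{n/2+s}+O(1),\qquad \int_\varepsilon^\delta\rho^{-n/2+s-1}\,d\rho=\tfrac{\varepsilon^{-n/2+s}}{n/2-s}+O(1)$$
(using $s<n/2$) isolate the two leading $\varepsilon$-powers, and integrating against $dv_h$ on $M$ turns the constant into $vol(M)$ and $Q_s^h$ into $\int_M Q_s^h\,dv_h$. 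Factoring out $\varepsilon^{-n/2-s}$ reassembles the claimed expansion with $V_s^h=\frac{1}{(n/2-s)\,d_s}\int_M Q_s^h\,dv_h$.

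The only delicate point, rather than a genuine obstacle, is the bookkeeping of error orders: one must verify that $\min\{2,4s\}>2s$ for every $s\in(0,1)$, so that the remainder integrates to a contribution of order $\varepsilon^{-n/2-s+\min\{2,4s\}}$, which is strictly lower order than $\varepsilon^{-n/2+s}$. This ensures that the first two coefficients of the expansion are intrinsic, independent of both the auxiliary collar width $\delta$ and of the finite-constant contribution $C_\delta$ from the interior region.
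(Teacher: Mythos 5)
Your proposal is correct and is essentially the natural (and, as far as I can tell, the same) argument: split off a compact interior piece contributing an $\varepsilon$-independent constant, use the normal form $g^+=\rho^{-2}(d\rho^2+h_\rho)$ to write $dv_{g^+}=\rho^{-n-1}\,d\rho\,dv_{h_\rho}$ with $dv_{h_\rho}=(1+O(\rho^2))\,dv_h$ from the even Fefferman--Graham expansion, raise the expansion of $\rho^*$ from Proposition~\ref{new-defining-function} to the power $n/2-s$ so that the prefactor $(n/2-s)$ cancels, and integrate the resulting powers of $\rho$ over $(\varepsilon,\delta)$. The error bookkeeping is handled correctly: for $s\in(0,1)$ one has $\min\{2,4s\}>2s$, and since $s<n/2$ the $O(1)$ contributions from the interior and from the $\delta$-endpoints are also subleading relative to $\varepsilon^{-n/2+s}$; this makes the first two coefficients intrinsic, exactly as claimed.
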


% =================================================================================================
% =================================================================================================

Finally, and as an application of the extension Theorems \ref{thm-Chang-Gonzalez} and \ref{thm-extension2}, we give a summary of the recent development on the fractional Yamabe problem.

The resolution of the classical Yamabe problem by Aubin, Schoen, Trudinger, has been one of the most significant advances in geometric analysis (see \cite{Lee-Parker,Schoen-Yau:book}, and the references therein). Given a smooth background  metric, the problem is to find a conformal one that has constant scalar curvature. In PDE language, this is \eqref{Yamabe-equation}.

One may  pose then the analogous question of finding a constant $Q_s$-curvature in the same conformal class as a given one. This study was initiated by the author and Qing in \cite{Gonzalez-Qing}, and it amounts to, given a background metric $(M^n,h)$, solve the following non-local semilinear geometric equation with critical exponent (recall \eqref{curvature-equation}),
\begin{equation}
\label{yamabe} P_s^{h}(u)=c
u^{\frac{n+2s}{n-2s}}, \quad u>0, \end{equation}
for some constant
$c$ on $M$.

Theorem \ref{thm-Chang-Gonzalez} allows to write \eqref{yamabe} as a local elliptic equation in the extension with a non-linear boundary reaction term:
\begin{equation}\label{Yamabe1}\left\{\begin{split}
-\divergence(\rho^a U)+E(\rho) U&=0 \quad\text{in }(X^{n+1},\bar g),\\
-d^*_s\lim_{\rho\to 0}\rho^a \partial_\rho U&=cu^{\frac{n+1}{n-1}}\quad \text{on } M^n,\quad u>0,
\end{split}\right.\end{equation}
where we have written $u=U(\cdot,0)$.

Even though \eqref{yamabe} is a non-local equation, the resolution to the fractional Yamabe problem follows the same scheme as in the original Yamabe problem for the scalar curvature, using a variational method. In addition, the $s=1/2$ case is deeply related to the prescribing constant mean curvature problem (also known as the boundary Yamabe problem) considered by Escobar \cite{Escobar}, Brendle-Chen \cite{Brendle-Chen}, Li-Zhu \cite{Li-Zhu}, Marques \cite{Marques1,Marques2}, Almaraz \cite{Almaraz}, Mayer-Ndiaye \cite{Mayer-Ndiaye1} and others, and which corresponds to the following Dirichlet-to-Neumann operator
\begin{equation}\label{boundary-Yamabe}\left\{\begin{split}
-\Delta_{\bar g} u+\tfrac{n-1}{4n} R_{\bar g} u&=0 \quad\text{in }(X^{n+1},\bar g),\\
\partial_\nu u+\tfrac{n-1}{2}Hu&=cu^{\frac{n+1}{n-1}}\quad \text{on } M^n.
\end{split}\right.\end{equation}
This connection will be made precise right below \eqref{connection}. However, there is a subtle issue: in the proof one will need to find  particular background metric $(X,\bar g)$ with very precise asymptotic behavior near a point $p\in M$ in a good coordinate system. However, in contrast to the study of \eqref{boundary-Yamabe}, where they are free to choose conformal Fermi coordinates on the filling $(\overline X,\bar g)$, our freedom of choice of metrics for \eqref{Yamabe1} is restricted to the boundary. Once a metric  $h_1\in[h]$ is chosen, the corresponding defining function $\rho_1$ is determined and the extension metric $\bar g_1$, written in normal form \eqref{normal-form} for $\bar g_1=(\rho_1)^2 g^+$, is unique and cannot be simplified.

Let us set up the notation for \eqref{yamabe}. We consider a scale-free functional on metrics in the class $[h]$ on $M$
given by
\begin{equation*}
\label{functional0} I_s[h] = \frac {\int_M
Q^{h}_s \,dv_{h}}{(\int_M \,dv_{h})^\frac
{n-2s}n}. \end{equation*}
 Or, if we set a base metric $h$ and write a
conformal metric $h_u=u^{\frac{4}{n-2s}}h$,
then
\begin{equation*}\label{functional1} I_s[u, h]=\frac{\int_M u
P_s^{ h} (u) \,dv_{h}}{ \lp \int_M u^{2^*}\,dv_{h}\rp^{\frac{2}{2^*}}},
\end{equation*} where
$$2^*=\frac{2n}{n-2s}.$$
We will
call $I_s$ the $s$-Yamabe functional.

Our objective is to find a metric in the
conformal class  $[h]$ that minimizes the
functional $I_s$. It is clear that a metric $h_u$, where
$u$ is a minimizer of $I_s[u,h]$,  is an admissible solution for \eqref{yamabe} (positivity will be guaranteed by an application of a suitable maximum principle).
 This suggests that we define the $s$\emph{-Yamabe constant}
\begin{equation*}
\label{y-constant}\Lambda_s (M,[h])=\inf \left\{
I_s[h_1] : h_1\in [h]\right\}.\end{equation*}
It is then apparent that $\Lambda_s(M,[ h])$ is an invariant on the conformal class $[h]$ when $g^+$ is fixed. In addition, it is proved in \cite{Gonzalez-Qing} that the sign of $\Lambda_s(M,[h])$ governs the sign of the possible constants $c$ in \eqref{yamabe}, and the sign of the first eigenvalue for $P_s^h$.

In the mean time, based on Theorem \ref{thm-Chang-Gonzalez}, we
set \begin{equation}\label{nicer-functional} \bar I_s[U,\bar
g]=\frac{\int_X \rho^a\abs{\nabla U}_{\bar g}^2
\,dv_{\bar g} + \int_X E(\rho)\,U^2\,dv_{\bar g}}{\lp \int_M
|U|^{2^*} \,dv_{h}\rp^{\frac{2}{2^*}}}.\end{equation}
Note then  \be\label{good-gamma-Yamabe}
\Lambda_s (X,  [h])=\inf \left\{ \bar I_s[U,\bar g] :
U\in W^{1,2}(X,\rho^a) \right\}. \ee
As a consequence,  fixing the integral $\int_M u^{2^*} \,dv_{h} = 1$, if $U$ is a minimizer of the functional $\bar I_s[\,\cdot\,,\bar g]$, then its trace $u=U(\cdot,0)$ is a solution for \eqref{yamabe}.

This minimization procedure is related to the trace Sobolev embedding
$$W^{1,2}(X,\rho^a)\hookrightarrow H^{s}(M)\hookrightarrow L^{2^*}(M),$$
which is continuous, but not compact. Hence the difficulty comes from this lack of compactness, which is well understood in the Euclidean case below:

\begin{thm}[\cite{Lieb:sharp-constants}]\label{Sobolev-embedding}
There exists a positive constant $C_{n,s}$ such that for every function  $U$ in $W^{1,2}(\RR,y^a)$  we have that
\begin{equation*} \norm{u}_{L^{2^*}(\R^n)}^2\leq  C_{n,s}
\int_{\RR} y^a \grad U 2\,dx\,dy, \end{equation*}
where $u$ is the trace $u:=U(\cdot,0)$. Moreover equality holds if and only if $u$ is a ``bubble", i.e.,
\begin{equation}\label{bubbles}u(x) = C \lp \frac{\mu}
{\abs{x-x_0}^2+\mu^2}\rp^{\frac{n-2s}{2}},\quad x\in \mathbb R^n,
\end{equation}
for $C\in\mathbb R$, $\mu>0$ and $x_0\in\R^n$ fixed, and $U=P*_x u$ its Poisson extension.
\end{thm}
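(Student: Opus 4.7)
The plan is to reduce this trace inequality to Lieb's sharp fractional Sobolev inequality on $\mathbb{R}^n$ through two observations about the weighted extension and then appeal to Lieb's Hardy--Littlewood--Sobolev (HLS) theorem for sharp constant and extremals.

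First I would show that among admissible $U\in W^{1,2}(\mathbb R^{n+1}_+, y^a)$ with prescribed trace $U|_{y=0}=u$, the weighted Dirichlet energy $\int_{\mathbb R^{n+1}_+} y^a|\nabla U|^2$ is minimized uniquely by the Poisson extension $U_\ast = P\ast_x u$. This follows from strict convexity of the energy and the fact that $U_\ast$ is the unique weak solution of the Euler--Lagrange equation $-\divergence(y^a\nabla U)=0$ with boundary value $u$; existence, uniqueness and trace theory in this weighted Sobolev space are standard since $y^a$ is an $A_2$ weight for $a=1-2s\in(-1,1)$ (Fabes--Kenig--Serapioni). Consequently it suffices to prove the inequality for $U=U_\ast$, and any equality case on the left-hand side forces $U\equiv U_\ast$.

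Second, for the Poisson extension I would use the energy identity
\begin{equation*}
\int_{\mathbb R^{n+1}_+} y^a|\nabla U_\ast|^2\,dx\,dy = \frac{1}{d_s^\ast}\int_{\mathbb R^n} u\,(-\Delta)^s u\,dx = \frac{1}{d_s^\ast}\,\|(-\Delta)^{s/2}u\|_{L^2(\mathbb R^n)}^2,
\end{equation*}
which follows from an integration by parts in $y$ together with the Caffarelli--Silvestre identification $-d_s^\ast\lim_{y\to 0}y^a\partial_y U_\ast = (-\Delta)^s u$ recalled in Theorem \ref{thm-Chang-Gonzalez}. In this way the desired bound becomes the sharp fractional Sobolev inequality $\|u\|_{L^{2^\ast}}^2\leq C'_{n,s}\|(-\Delta)^{s/2}u\|_{L^2}^2$ with $C'_{n,s}=d_s^\ast\, C_{n,s}$.

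Finally, I would invoke Lieb's sharp HLS inequality. Writing $v=(-\Delta)^{s/2}u$ and using the Riesz potential $(-\Delta)^{-s}g = c_{n,s}\,|\cdot|^{-(n-2s)}\ast g$, duality turns the Sobolev inequality into the HLS bound
\begin{equation*}
\left|\iint_{\mathbb R^n\times\mathbb R^n}\frac{f(x)g(y)}{|x-y|^{n-2s}}\,dx\,dy\right|\leq C\,\|f\|_{L^{2n/(n+2s)}}\|g\|_{L^{2n/(n+2s)}},
\end{equation*}
whose sharp constant and extremal functions were determined by Lieb \cite{Lieb:sharp-constants} via symmetric decreasing rearrangement and the competing-symmetries method; dualizing back identifies the Sobolev extremals with the bubbles \eqref{bubbles}.

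The main obstacle is tracking the equality case all the way to the Poisson extension of a bubble. One first uses the uniqueness of HLS extremals up to the conformal group of $\mathbb R^n$ (translations, dilations, and multiplicative constants) to conclude that $u$ must be of the form \eqref{bubbles}; then one reverses the duality to identify $v$ and hence $u$; and finally the strict convexity in the first step forces $U=P\ast_x u$. The remaining steps (justifying integration by parts against the degenerate weight, density of smooth compactly supported functions in $W^{1,2}(\mathbb R^{n+1}_+, y^a)$, and passing to the trace in $L^{2^\ast}(\mathbb R^n)$) are routine given the $A_2$ theory and the extension framework of Section \ref{section:extension}.
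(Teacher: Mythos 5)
The paper does not give a proof of this theorem; it simply cites Lieb's paper for the sharp constant and extremals, with the reduction to the extension problem left implicit in the framework of Section 3. Your reconstruction is correct and fills in exactly that gap: reduce to the Poisson extension by strict convexity of the weighted Dirichlet energy, pass to $\|(-\Delta)^{s/2}u\|_{L^2}^2$ via the Caffarelli--Silvestre energy identity, and then invoke Lieb's sharp HLS inequality and its extremals by duality. The only quibble is a minor historical one: the competing-symmetries argument you attribute to Lieb is due to Carlen and Loss (1990); Lieb's 1983 proof uses symmetric decreasing rearrangement together with a conformal invariance (inversion) argument to pin down the extremals. This does not affect the validity of your argument. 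One point worth making explicit, which you gesture at, is that the equality case in the theorem has two ingredients that must be separated cleanly: equality forces $U$ to be the unique energy minimizer with its boundary trace (so $U=P*_x u$), and \emph{independently} forces $u$ to saturate the fractional Sobolev inequality, hence be a bubble \eqref{bubbles}; your strict-convexity remark handles the first and Lieb's rigidity the second, which is the right division of labor.
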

 We also remark that all entire positive solutions to
 $$(-\Delta)^s u=u^{\frac{n+2s}{n-2s}},\quad u>0,$$
 have been completely classified (see \cite{JinLiXiong}, for instance, for an account of references). In particular, they must be the standard ``bubbles" \eqref{bubbles}. Other non-linearities for fractional Laplacian equations have been considered, for instance in  \cite{Cabre-Tan:positive-solutions,Cabre-Sire:I,Servadei-Valdinoci:Brezis-Nirenberg,
 Barrios-Colorado-DePablo-Sanchez}, although by no means this list is exhaustive. \\

Going back to the minimization problem \eqref{good-gamma-Yamabe}, we observe that the variational method that was used in the resolution of the classical Yamabe problem can still be applied, but the difficulty comes from the specific structure of metric in the filling $(X,\bar g)$. In any case, one starts by comparing the Yamabe constant on $M$ to the Yamabe constant on the sphere.

Using stereographic projection, from Theorem \ref{Sobolev-embedding} it is easily seen that
$$
\Lambda_s(\mathbb S^n, [h_{\mathbb S^n}]) = \frac{1}{C_{n,s}},
$$
where $[h_{\mathbb S^n}]$ is the canonical conformal class of metrics on the
sphere $\mathbb S^n$ understood as the conformal infinity of the Poincar\'e ball.

Suppose that $(X^{n+1}, g^+)$ is an
asymptotically hyperbolic manifold with a geodesic defining function $\rho$ and set $\bar g=\rho^2 g^+$. Let $(M^n, [h])$ be its conformal infinity. One can show that (\cite{Gonzalez-Qing,Case:energy-inequalities}) the fractional Yamabe constant satisfies
$$-\infty<\Lambda_s(M,[h])\leq \Lambda_s(\mathbb S^n,[h_{\mathbb S^n}]).$$

\begin{thm}[\cite{Gonzalez-Qing}]
In the setting above, if
\begin{equation}\label{condition}
\Lambda_s(M,[h])<\Lambda_s(\mathbb S^n,[h_{\mathbb S^n}]),
\end{equation} then the
$s$-Yamabe problem is solvable for $s \in (0, 1)$.
\end{thm}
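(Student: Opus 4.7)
The plan is to follow the classical Aubin strategy adapted to the extension framework. By Theorem \ref{thm-Chang-Gonzalez}, a positive critical point of the functional $\bar I_s[\,\cdot\,,\bar g]$ defined in \eqref{nicer-functional} yields, through its boundary trace $u$, a solution of the fractional Yamabe equation \eqref{yamabe}; hence the problem reduces to producing a minimizer of $\bar I_s$ on $W^{1,2}(X,\rho^a)$. The central difficulty is that the trace Sobolev embedding $W^{1,2}(X,\rho^a)\hookrightarrow L^{2^*}(M)$ is continuous but not compact, so the direct method fails and a concentration-compactness analysis is required; the strict inequality \eqref{condition} is exactly what rules out the bad concentration scenario.

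First I would regularize by subcritical approximation. For each $q\in(2,2^*)$, the trace embedding $W^{1,2}(X,\rho^a)\hookrightarrow L^q(M)$ is compact, so the infimum
$$\Lambda_s^q := \inf\left\{\frac{\int_X \rho^a|\nabla U|_{\bar g}^2\,dv_{\bar g}+\int_X E(\rho)U^2\,dv_{\bar g}}{\lp\int_M |U|^q\,dv_h\rp^{2/q}} : U\in W^{1,2}(X,\rho^a)\right\}$$
is attained by some $U_q$ with $\int_M |U_q|^q\,dv_h=1$. A maximum principle adapted to the degenerate operator $-\divergence(\rho^a\nabla\cdot)+E(\rho)$ gives $U_q>0$, and $U_q$ solves the subcritical Euler-Lagrange problem analogous to \eqref{Yamabe1} with exponent $q-1$ on the boundary. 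A standard continuity argument shows $\Lambda_s^q\to\Lambda_s(M,[h])$ as $q\nearrow 2^*$, and $\{U_q\}$ is bounded in $W^{1,2}(X,\rho^a)$, so up to subsequence $U_q\rightharpoonup U_\infty$ weakly with trace $u_\infty$.

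The decisive step is to show $u_\infty\not\equiv 0$. If the sequence of traces $\{u_q\}$ concentrates at a point $p_0\in M$, one rescales around $p_0$ by a shrinking factor $\mu_q\to 0$ chosen to normalize the concentration. In a boundary-adapted coordinate system in which $\bar g$ approaches the Euclidean metric and $\rho$ approaches $y$, the rescaled functions converge, after suitable care with the weight $\rho^a$, to a nontrivial nonnegative solution of the limiting problem
\begin{equation*}
-\divergence(y^a\nabla V)=0\quad\text{in }\RR,\qquad -d_s^*\lim_{y\to 0}y^a\partial_y V=c\,v^{\frac{n+2s}{n-2s}}\quad\text{on }\R^n,
\end{equation*}
whose positive solutions are the Euclidean bubbles \eqref{bubbles} by the classification results mentioned after Theorem \ref{Sobolev-embedding}. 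Each such bubble carries energy at least $1/C_{n,s}=\Lambda_s(\mathbb S^n,[h_{\mathbb S^n}])$ by the sharp inequality in Theorem \ref{Sobolev-embedding}, combined with the conformal equivalence between $\mathbb S^n$ (as the boundary of the Poincar\'e ball) and $\R^n$ via stereographic projection. A Brezis-Lieb type splitting for the energy functional $\bar I_s$ then yields
$$\Lambda_s(M,[h])=\lim_{q\to 2^*}\Lambda_s^q \geq \bar I_s[U_\infty,\bar g] + \Lambda_s(\mathbb S^n,[h_{\mathbb S^n}]),$$
and the strict inequality \eqref{condition} forces the concentration term to be absent, so $U_\infty\neq 0$ is a genuine minimizer. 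A final application of the maximum principle shows $u_\infty>0$, producing the desired constant $Q_s$-curvature metric $h_{u_\infty}\in[h]$.

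The main obstacle is the blow-up analysis near a concentration point, because, as emphasized in the discussion around \eqref{boundary-Yamabe}, once a boundary metric $h_1\in[h]$ is fixed the filling metric $\bar g_1$ is \emph{determined} and cannot be put in conformal Fermi coordinates as in Escobar's setting. One must instead exploit the explicit normal form \eqref{normal-form} for $g^+$ together with the precise asymptotic expansion of the special defining function of Proposition \ref{new-defining-function} to control the error terms in the rescaled functional with the anisotropic weight $\rho^a$. A parallel challenge, outside the scope of this reduction, is verifying the strict inequality \eqref{condition} itself in concrete conformal classes, which requires a delicate test-function construction and is responsible for the cases still left open.
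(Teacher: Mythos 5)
Your proposal follows the same variational scheme used in \cite{Gonzalez-Qing}: reduce via the extension to minimizing $\bar I_s$ on $W^{1,2}(X,\rho^a)$, regularize by subcritical exponents (which restores compactness of the trace embedding), pass to the weak limit, and use the sharp trace Sobolev inequality together with the strict comparison $\Lambda_s(M,[h])<\Lambda_s(\mathbb S^n,[h_{\mathbb S^n}])$ to rule out vanishing of the limit, then apply a maximum principle for the degenerate operator to get positivity. One small caveat: the additive splitting you wrote, $\Lambda_s(M,[h])\geq \bar I_s[U_\infty,\bar g]+\Lambda_s(\mathbb S^n)$, is not the correct form for the \emph{normalized} functional $\bar I_s$ — the scale-invariance makes the right estimate a convex-combination bound in the $L^{2^*}$ masses of $U_\infty$ and the bubble rather than an additive one — but this does not change the conclusion, since either form rules out concentration under the strict inequality; the cleaner classical route is to apply the almost-sharp Sobolev trace inequality directly to the subcritical minimizers $U_q$ and let the $L^2$ term vanish, exactly as in Aubin's argument.
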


Therefore, it suffices to find a suitable test function in the functional \eqref{nicer-functional} that attains this strict inequality. As we have mentioned, one needs to find suitable conformal normal coordinates on $M$ by conformal change, and then deal with the corresponding extension metric. Hence one needs to make some assumptions on the behavior of the asymptotically hyperbolic manifold $g^+$ . The underlying idea here is to have $g^+$ as close as possible as a Poincar\'e-Einstein manifold. The first one of these assumptions is
$$R_{g^+}+n(n+1)=o(\rho^2) \quad\text{as}\quad \rho\to 0,$$
which looks very reasonable in the light of \eqref{E-general}. In particular, under this condition one has that
\begin{equation}\label{connection}E(\rho)=\tfrac{n-1+a}{4n}R_{\bar g}\rho^a+o(\rho^2)\quad\mbox{as}\quad\rho\to 0.\end{equation}
(compare to \eqref{E-normal-form}). Another consequence of this expression is that the $1/2$-Yamabe problem coincides to the prescribing constant mean curvature problem \eqref{boundary-Yamabe}, up to a small error.
In general one needs a higher order of vanishing for $g^+$ (see \cite{Kim-Musso-Wei} for the precise statements), which is automatically  true if $g^+$ is Poincar\'e-Einstein and not just asymptotically hyperbolic. This also shows that the natural geometric setting for an asymptotically hyperbolic $g^+$ is to demand that $g^+$ has constant scalar curvature $R_{g^+}=-n(n+1)$.

The first attempt to prove \eqref{condition} was \cite{Gonzalez-Qing} in the non-umbilic case, where the authors use a bubble as a test function. The umbilic, non-locally conformally flat case in high dimensions was considered in  \cite{Gonzalez-Wang}. Finally, Kim, Musso and Wei \cite{Kim-Musso-Wei} have provided an unified development, covering all the cases that do not need a positive mass theorem for the conformal fractional Laplacian. Their test function is not a ``bubble" but instead it has a  more complicated geometry. Summarizing, some hypothesis under which the fractional Yamabe problem for $s\in(0,1)$ is solvable (in addition to those on $g^+$ above) are:
\begin{itemize}
\item $n\geq 2$, $s\in(0,1/2)$, $M$ has a point of negative mean curvature.
\item $n\geq 4$, $s\in(0,1)$, $M$ is not umbilic.
\item $n>4+2s$, $M$ is umbilic but not locally conformally flat.
\item $M$ is locally conformally flat or $n=2$, and the fractional positive mass theorem holds.
\end{itemize}
However, we see from this last point that to cover all the cases with this method one still needs to develop a positive mass theorem for the Green's function of the conformal fractional Laplacian, which is at this time a puzzling open question. From another point of view, we mention the work \cite{Mayer-Ndiaye}, where they use the the barycenter technique of Bahri-Coron to bypass the positive mass issue for the locally flat and umbilic conformal infinity.

Finally, one may look at the lack of compactness phenomenon. In general, Palais-Smale sequences can be decomposed into the solution of the limit equation plus a finite number of bubbles. Moreover, the multi-bubbles are non-interfering even though the operator is non-local (see, for instance, \cite{Fang-Gonzalez,Palatucci-Pisante,Kim-Musso-Wei:noncompactness,Kim-Musso-Wei:compactnessI}).

% =========================================================================================================
% =========================================================================================================

% --------------------------------------------------------------------------------------------
%-----------------------------------------------------------------------------------------------------

\section{The conformal fractional Laplacian on the sphere}\label{section:sphere}

In this section we look at the sphere $\mathbb S^n$ with the round metric $h_{\mathbb S^n}$, understood as the conformal infinity of the Poincar\'e ball model for hyperbolic space $\mathbb H^{n+1}$. Note that hyperbolic space is the simplest example of a Poincar\'e-Einstein manifold, and the model for the general development.

On $\mathbb S^n$ one explicitly knows (\cite{Branson:sharp-inequalities}, see also the lecture notes \cite{Branson}, for instance) that the conformal fractional Laplacian (or intertwining operator) has the explicit expression
\begin{equation}\label{intertwining-sphere}P^{\mathbb S^n}_s=\frac{\Gamma\lp A_{1/2}+s+\tfrac{1}{2}\rp}{\Gamma\lp A_{1/2}-s+\tfrac{1}{2}\rp}, \quad
A_{1/2}=\sqrt{-\Delta_{\mathbb S^n}+\lp\tfrac{n-1}{2}\rp^2},\end{equation}
for all $s\in(0,n/2)$.  From here one easily calculates  that the fractional curvature of the sphere is a positive constant
\begin{equation}\label{curvature-sphere}
Q^{\mathbb S^n}_s=P_s^{\mathbb S^n}(1)=\frac{\Gamma\lp \frac{n}{2}+s\rp}{\Gamma\lp \frac{n}{2}-s\rp}.
\end{equation}
Formula \eqref{intertwining-sphere} may be easily derived from the scattering problem \eqref{equation-GZ}-\eqref{general-solution}. A proof can be found in the book \cite{Baum-Juhl:book}, which also makes the link to the representation theory community. Note, however, a different factor of 2, which is always an issue when passing from representation theory to geometry. For convenience of the reader not familiar with this subject we provide a direct proof below.

Consider the Poincar\'e metric for
hyperbolic space $\mathbb H^{n+1}$, written in normal form \eqref{normal-form} as
$$g^+=\rho^{-2}\lp d\rho^2+\big(1-\tfrac{\rho^2}{4}\big)^2h_{\mathbb S^n}\rp,$$
for $\rho\in(0,2]$. Remark that $\rho=2$ corresponds to the origin of the Poincar\'e ball and thus the apparent singularity is just a consequence of the expression for the metric in polar-like coordinates.

Calculating the Laplace-Beltrami operator with respect to $g^+$ we obtain, recalling that $\sigma=\frac{n}{2}+s$, that the eigenvalue equation \eqref{equation-GZ} is equivalent to the following:
\begin{equation}\label{ode1}
\rho^{n+1}\lp 1-\tfrac{\rho^2}{4}\rp^{-n}\partial_\rho\left[ \rho^{-n+1}\left(1-\tfrac{\rho^2}{4}\right)^n\partial_\rho w\right]+\rho^2\left(1-\tfrac{\rho^2}{4} \right)^{-2}\Delta_{\mathbb S^n}w+\lp\tfrac{n^2}{4}-s^2\rp w=0.
\end{equation}
We will show that the operator $P_s^{\mathbb S^n}$ diagonalizes
in the spherical harmonic decomposition for $\mathbb S^{n}$. With some abuse of notation, let $\mu_m=m(m+n-1)$, $m=0,1,2,...$ be the eigenvalues of $-\Delta_{\mathbb S^{n}}$, repeated according to multiplicity, and $\{E_m\}$ be the corresponding basis of eigenfunctions. The projection of \eqref{ode1} onto each eigenspace $\langle E_m\rangle$ yields
\begin{equation*}\label{ode2}
\rho^{n+1}\lp 1-\tfrac{\rho^2}{4}\rp^{-n}\partial_\rho\left[ \rho^{-n+1}\left(1-\tfrac{\rho^2}{4}\right)^n\partial_\rho w_m\right]-\rho^2\left(1-\tfrac{\rho^2}{4} \right)^{-2} \mu_m w_m+\lp\tfrac{n^2}{4}-s^2\rp w_m=0.
\end{equation*}
This is a hypergeometric ODE with general solution
\begin{equation}\label{w_m}
w_m(\rho)=c_1 \rho^{\frac{n}{2}-s}\varphi_1(\rho)+c_2 \rho^{\frac{n}{2}+s}\varphi_2(\rho), \quad c_1,c_2\in\mathbb R,
\end{equation}
for
\begin{equation*}
\begin{split}
&\varphi_1(\rho):= (\rho^2-4)^{\frac{-n-\beta+1}{2}}\,_2F_1 \lp \tfrac{-\beta+1}{2},
\tfrac{-\beta+1}{2}-s,1-s,\tfrac{\rho^2}{4}\rp,\\
&\varphi_2(\rho):= (\rho^2-4)^{\frac{-n-\beta+1}{2}}\,_2F_1 \lp \tfrac{-\beta+1}{2},
\tfrac{-\beta+1}{2}+s,1+s,\tfrac{\rho^2}{4}\rp,
\end{split}\end{equation*}
where we have defined
$$\beta:=\sqrt{(n-1)^2+4\mu_m}$$
and $\,_2F_1$ is the usual Hypergeometric function.

In order to calculate the conformal fractional Laplacian, first one needs to obtain an asymptotic expansion of the form \eqref{general-solution} for $U,\tilde U$ smooth up to $\overline X$. Since $w$ must be smooth at the central point $\rho=2$, one should choose the constants $c_1,c_2$ such that in \eqref{w_m} the singularities of $\varphi_1$ and $\varphi_2$ at $\rho=2$ cancel out. This is,
\begin{equation}\label{equation100}c_1 2^{\frac{n}{2}-s} \,_2F_1 \lp \tfrac{-\beta+1}{2},
\tfrac{-\beta+1}{2}-s,1-s,1\rp
+c_2 2^{\frac{n}{2}+s}\,_2F_1 \lp \tfrac{-\beta+1}{2},
\tfrac{-\beta+1}{2}+s,1+s,1\rp=0.\end{equation}
In order to simplify this expression, recall the following property of the Hypergeometric function from \cite{Abramowitz-Stegun}: if $a+b<c$, then
$$_2F_1(a,b,c,1)=\frac{\Gamma(c)\Gamma(c-a-b)}{\Gamma(c-a)\Gamma(c-b)}.$$
After some calculation, \eqref{equation100} yields
\begin{equation}\label{equation101}
\frac{c_2}{c_1}= 2^{-2s}\frac{\Gamma\big(\tfrac{1}{2}+s+\frac{\beta}{2}\big) \Gamma(-s) }{\Gamma\big(\frac{1}{2}-s+\frac{\beta}{2}\big) \Gamma(s)}.
\end{equation}

 Next, looking at the definition of the conformal fractional Laplacian from \eqref{normalized-scattering}, and noting that both $\varphi_1,\varphi_2$ are smooth at $\rho=0$, we conclude from \eqref{equation101} that
$$P_s^{\mathbb S^n}|_{\langle E_m\rangle} u_m= d_s\frac{c_2}{c_1}u_m= \frac{\Gamma\big(\frac{1}{2}+s+\frac{\beta}{2}\big)}
{\Gamma\big(\frac{1}{2}-s+\frac{\beta}{2}\big)} u_m. $$
This concludes the proof of \eqref{intertwining-sphere} when $s\in(0,n/2)$ is not an integer.\\

For integer powers $k\in\mathbb N$, it can be shown that  \eqref{intertwining-sphere} also yields the factorization formula for the GJMS operators on the sphere
\begin{equation}\label{factorization-integers}P_k^{\mathbb S^n}=\prod_{j=1}^k \left\{ -\Delta_{\mathbb S^n}+\lp\tfrac{n}{2}+j-1\rp\lp\tfrac{n}{2}-j\rp\right\}.\end{equation}
The paper \cite{Graham:conformal-powers} by Graham independently derives this expression just by using the formula for the corresponding operator on Euclidean space $\mathbb R^n$ and then stereographic projection to translate it back to the sphere $\mathbb S^n$.

Here we show that Graham's method using stereographic projection also works for non-integer $s$, yielding a factorization formula in the spirit of \eqref{factorization-integers}. The advantage of this formulation is that it does not require the extension, but only the conformal property  \eqref{conformal-covariance} from Euclidean space to the sphere.

\begin{prop}\label{thm-formula-induction-sphere} Let $ s _0\in(0,1)$, $k\in\mathbb N$. Then
$$P^{\mathbb S^n}_{k+ s _0}=\prod_{j=1}^k \lp P_1^{\mathbb S^n}+c_j\rp P_{ s _0}^{\mathbb S^n}.$$
for $c_j=-(s _0+j-1)( s _0+j)$.
\end{prop}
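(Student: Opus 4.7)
The plan is to derive the factorization directly from the explicit spectral formula \eqref{intertwining-sphere}, using only the functional equation of the Gamma function. All the operators in sight are polynomial (or meromorphic) functions of the single operator $A_{1/2}=\sqrt{-\Delta_{\mathbb S^n}+((n-1)/2)^2}$, so they commute, and the identity can be checked by a functional calculus computation on each spherical harmonic eigenspace.

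First, by \eqref{intertwining-sphere} and $k$ applications of $\Gamma(z+1)=z\Gamma(z)$ in both the numerator and denominator, I would write
\begin{equation*}
P_{k+s_0}^{\mathbb S^n}
=\frac{\Gamma\!\left(A_{1/2}+s_0+k+\tfrac{1}{2}\right)}{\Gamma\!\left(A_{1/2}-s_0-k+\tfrac{1}{2}\right)}
=\prod_{j=1}^{k}\!\left(A_{1/2}+s_0+j-\tfrac{1}{2}\right)\!\left(A_{1/2}-s_0-j+\tfrac{1}{2}\right)\,P_{s_0}^{\mathbb S^n},
\end{equation*}
where the last equality uses that the leftover Gamma quotient is exactly $P_{s_0}^{\mathbb S^n}$. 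Since $A_{1/2}$ commutes with $P_{s_0}^{\mathbb S^n}$ (both are functions of $-\Delta_{\mathbb S^n}$), the product may be ordered freely.

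Next I would collapse each pair of linear factors into a quadratic in $A_{1/2}$:
\begin{equation*}
\left(A_{1/2}+s_0+j-\tfrac{1}{2}\right)\!\left(A_{1/2}-s_0-j+\tfrac{1}{2}\right)
=A_{1/2}^{2}-\left(s_0+j-\tfrac{1}{2}\right)^{2}
=-\Delta_{\mathbb S^n}+\left(\tfrac{n-1}{2}\right)^{2}-\left(s_0+j-\tfrac{1}{2}\right)^{2}.
\end{equation*}
Recalling from \eqref{factorization-integers} (case $k=1$) that $P_{1}^{\mathbb S^n}=-\Delta_{\mathbb S^n}+\tfrac{n(n-2)}{4}$, one sees that $(\tfrac{n-1}{2})^{2}-\tfrac{n(n-2)}{4}=\tfrac{1}{4}$, so this quadratic equals $P_{1}^{\mathbb S^n}+\tfrac{1}{4}-(s_0+j-\tfrac{1}{2})^{2}$.

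Finally a one-line algebraic identity,
\begin{equation*}
\tfrac{1}{4}-\left(s_0+j-\tfrac{1}{2}\right)^{2}=-(s_0+j)(s_0+j-1)=c_j,
\end{equation*}
finishes the proof. There is no real obstacle here: once one accepts \eqref{intertwining-sphere} and the commutativity afforded by the functional calculus in $A_{1/2}$, the result is a bookkeeping exercise with the Gamma functional equation and a completion of the square. The only minor point worth checking is that the hypothesis $s_0\in(0,1)$ ensures the Gamma quotients stay away from poles when peeling off the factors, so the telescoping is valid at the operator level (equivalently, on every eigenspace $\langle E_m\rangle$ of $-\Delta_{\mathbb S^n}$, where $A_{1/2}$ acts by the positive scalar $\sqrt{\mu_m+((n-1)/2)^{2}}$).
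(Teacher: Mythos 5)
Your proof is correct: the telescoping of the Gamma quotient, the identification $A_{1/2}^2 = P_1^{\mathbb S^n}+\tfrac14$, and the completion of the square all check out, and the remark that each eigenvalue $\lambda_m = m+\tfrac{n-1}{2}$ keeps every Gamma argument strictly positive for $s_0\in(0,1)$ and $s_0+k<n/2$ is exactly what is needed to justify the peeling at the operator level.

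However, your route is genuinely different from the one the paper takes, and it is worth understanding why. You start from the spectral formula \eqref{intertwining-sphere}, which the paper derives earlier by solving the scattering ODE \eqref{ode1} explicitly; once one accepts that formula, the Proposition reduces, as you show, to pure Gamma-function bookkeeping. The paper's own proof deliberately does \emph{not} invoke \eqref{intertwining-sphere}. Following Graham's method in \cite{Graham:conformal-powers}, it reduces to $k=1$ by induction, defines $\mathcal P := \lp P_1^{\mathbb S^n}+c_s\rp P_s^{\mathbb S^n}$, and verifies directly — via the stereographic projection identities \eqref{conformal-invariant-laplacian}, \eqref{conformal-invariant-fractional} and the commutator identities of Lemma \ref{lemma-commutators} — that $\mathcal P$ satisfies the conformal covariance relation of order $1+s$; it then concludes by uniqueness of the conformally covariant operator with leading symbol $(-\Delta)^{1+s}$. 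The point the paper is making, stated explicitly just before the Proposition, is that the factorization formula can be obtained without any appeal to the extension or to scattering theory, using only the intertwining property from $\mathbb R^n$ to $\mathbb S^n$. Your argument is shorter and cleaner once \eqref{intertwining-sphere} is in hand, but it cannot serve as independent evidence for that formula, whereas the paper's stereographic argument could. Both are valid; they simply illuminate the result from opposite ends.
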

Here $P_1^{\mathbb S^n}$ is the usual conformal Laplacian \eqref{conformal-Laplacian1} on $\mathbb S^n$, i.e.,
\be\label{conformal-Laplacian-sphere}P_1^{\mathbb S^n}=-\Delta_{\mathbb S^n}+\tfrac{n(n-2)}{4}.\ee

Before we give a proof of this result we set up we set up the notation for the stereographic projection from South pole.  $\mathbb S^n$ is parameterized by coordinates $z'=(z_1,\ldots,z_n)$, $z_{n+1}$ such that $\abs{z'}^2+z_{n+1}^2=1$, and $\mathbb R^n$ by coordinates $x\in\mathbb R^n$.
Let $\phi:\mathbb S^n\backslash\{S\}\to\mathbb R^n$ be the map given by
$$x:=\phi(z',z_{n+1})=\frac{z'}{1+z_{n+1}}.$$
The push forward map is just
$$\phi^*\lp \frac{2}{1+\abs{x}^2}\rp=1+z_{n+1}$$
and, by conformality, it transforms the metric as
\begin{equation}\label{Jacobian}\phi^* h_{eq}=(1+z_{n+1})^{-2}h_{\mathbb S^{n}},\end{equation}
where $h_{eq}=\abs{dx}^2$ is the Euclidean metric.
For simplicity, we denote the conformal factor as
$$B^p f =(1+z_{n+1})^p f,\quad B_p f=2^p (1+\abs{x}^2)^{-p}f,$$
and note that the change of variable between them is simply
\begin{equation*}\label{relation}B^p \phi^*=\phi^* B_p,\end{equation*}
which will be used repeatedly in the following.

Let $-\Delta$ be the standard Laplacian on $\mathbb R^n$. It is related to the conformal Laplacian on the sphere \eqref{conformal-Laplacian-sphere} by the transformation law \eqref{conformal-property-Laplacian}, written as
\be\label{conformal-invariant-laplacian}P_1^{\mathbb S^n} B^{1-n/2}\phi^*=B^{-1-n/2}\phi^*(-\Delta).\ee
The conformal fractional Laplacian also satisfies the conformal covariance property \eqref{conformal-covariance}, which is
\be P^{\mathbb S^n}_ s  B^{ s -n/2}\phi^*=B^{- s -n/2}\phi^*(-\Delta)^ s ,\label{conformal-invariant-fractional}\ee
where $(-\Delta)^ s $ is the standard fractional Laplacian on $\mathbb R^n$ with respect to the Euclidean metric.\\

We show some preliminary commutator identities on $\mathbb R^n$:

\begin{lemma}\label{lemma-commutators}
Let $X=\sum x_i \partial_{x_i}$. Then
\begin{equation}\label{commutator1}\begin{split}
&[-\Delta,X]=2(-\Delta), \\
&[X,B_p]=-p\abs{x}^2 B_{p+1},\\
&[-\Delta,B_p]=p B_p (2X+n-(p-1)B_1\abs{x}^2)B_1,\\
&[(-\Delta)^ s , B_{-1}]= -s  \lp 2X+n+2( s -1)\rp(-\Delta)^{ s -1}.
\end{split}\end{equation}
\end{lemma}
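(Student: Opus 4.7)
All four identities are commutator computations on $\mathbb{R}^n$; the first three are pure calculus while the fourth requires the standard Fourier symbol of $(-\Delta)^s$. The plan is to treat them in the stated order, since each reuses the previous ones.

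For (i), one checks $[\partial_i,X]=\partial_i$ by inspection; squaring gives $[\partial_i^2,X]=2\partial_i^2$ and summing over $i$ yields $[-\Delta,X]=2(-\Delta)$. For (ii), set $\beta_p(x):=2^p(1+|x|^2)^{-p}$ so that $B_p$ is multiplication by $\beta_p$; then $[X,B_p]f=(X\beta_p)f$, and Euler's formula gives $X\beta_p=-2p|x|^2\cdot 2^p(1+|x|^2)^{-p-1}=-p|x|^2B_{p+1}$.

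For (iii), I would expand via Leibniz as $[-\Delta,B_p]f=-(\Delta\beta_p)f-2\nabla\beta_p\cdot\nabla f$. A direct calculation gives $\nabla\beta_p\cdot\nabla f=-pB_{p+1}Xf$ and $\Delta\beta_p=-pnB_{p+1}+p(p+1)|x|^2B_{p+1}B_1$, hence
$$[-\Delta,B_p]=2pB_{p+1}X+pnB_{p+1}-p(p+1)|x|^2B_{p+1}B_1.$$
Using the semigroup rule $B_pB_q=B_{p+q}$ and (ii) in the form $[X,B_1]=-|x|^2B_1^2$, this can be reassembled as $pB_p(2X+n-(p-1)B_1|x|^2)B_1$; the only delicate bit is correctly moving $X$ past $B_1$ to reproduce the prescribed ordering.

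The fourth identity is the main point, because it involves the nonlocal operator. I would first prove the symbol commutator
$$[(-\Delta)^s,x_j]=-2s\,\partial_j(-\Delta)^{s-1},$$
which is immediate on the Fourier side from $\widehat{x_jf}=i\partial_{\xi_j}\hat f$ together with $\partial_{\xi_j}|\xi|^{2s}=2s\xi_j|\xi|^{2s-2}$. Then I expand
$$[(-\Delta)^s,|x|^2]=\sum_j\bigl(x_j[(-\Delta)^s,x_j]+[(-\Delta)^s,x_j]x_j\bigr),$$
substitute, push the iterated commutator through using $[(-\Delta)^{s-1},x_j]=-2(s-1)\partial_j(-\Delta)^{s-2}$, and use $\sum_j\partial_jx_j=n+X$ together with $\Delta(-\Delta)^{s-2}=-(-\Delta)^{s-1}$ to gather everything into $-2s(2X+n+2(s-1))(-\Delta)^{s-1}$. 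Since $B_{-1}=\tfrac12(1+|x|^2)$, dividing by two gives the claim. The main obstacle is the bookkeeping when rearranging $x_j$ and $\partial_j$ factors around the nonlocal $(-\Delta)^{s-1}$; once the symbol commutator is in hand, everything reduces to algebra.
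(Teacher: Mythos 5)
Your arguments are correct, and each step checks out. For the first three identities you give direct multiplication-operator computations (the paper simply cites Graham's stereographic-projection paper for these): $[\partial_i,X]=\partial_i$ summed up for (i), $[X,B_p]f=(X\beta_p)f$ together with Euler's formula for (ii), and for (iii) your intermediate expression $[-\Delta,B_p]=2pB_{p+1}X+pnB_{p+1}-p(p+1)|x|^2B_{p+2}$ is right and does reassemble to $pB_p\left(2X+n-(p-1)B_1|x|^2\right)B_1$ once one expands $B_pXB_1=B_{p+1}X-|x|^2B_{p+2}$ using (ii) and $B_pB_q=B_{p+q}$.

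For the fourth identity you take a genuinely different, and arguably cleaner, route than the paper. The paper writes $[(-\Delta)^s,B_{-1}]u=\tfrac12\left((-\Delta)^s(|x|^2u)-|x|^2(-\Delta)^su\right)$, computes $\mathcal F\{|x|^2(-\Delta)^su\}=-\Delta_\xi\left(|\xi|^{2s}\hat u\right)$, expands the Laplacian on the Fourier side, rewrites one piece in divergence form, and takes the inverse transform in one shot. You instead isolate the Fourier input to the elementary symbol commutator $[(-\Delta)^s,x_j]=-2s\,\partial_j(-\Delta)^{s-1}$, and then build $[(-\Delta)^s,|x|^2]$ purely algebraically using $[A,x_j^2]=x_j[A,x_j]+[A,x_j]x_j$, $\sum_j\partial_jx_j=n+X$, and $\Delta(-\Delta)^{s-2}=-(-\Delta)^{s-1}$. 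The underlying Fourier fact is the same in both, but your organization modularizes the nonlocal ingredient and makes the appearance of the factor $2X+n+2(s-1)$ transparent as the sum of an Euler operator, a divergence count, and a homogeneity contribution. As a side benefit, this route sidesteps a small slip in the paper's displayed inverse transform, where the middle term should read $4sX(-\Delta_x)^{s-1}u(x)$ rather than $4sXu(x)$; the paper's final conclusion is nonetheless correct.
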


\begin{proof}
The first three are direct calculations and can be found in \cite{Graham:conformal-powers}, while the last one is proved by Fourier transform. Compute
\begin{equation}\label{equation102}
[(-\Delta)^ s , B_{-1}]u(x)=\tfrac{1}{2}\left[(-\Delta)^s \{|x|^2u(x)\}-|x|^2(-\Delta)^su(x)\right].
\end{equation}
Fourier transform, with the multiplicative constants normalized to one, yields
 \begin{equation*}\begin{split}
\mathcal F\{|x|^2(-\Delta_x)^su(x)\}&=-\Delta_\xi\{|\xi|^{2s} \hat u(\xi)\}\\
&=-2s(n+2(s-1))|\xi|^{2(s-1)} \hat u(\xi)-4s|\xi|^{2(s-1)}\sum_{i=1}^n \xi_i \partial_{\xi_i} \hat u(\xi)-|\xi|^{2s} \Delta_{\xi} \hat u(\xi)\\
&=2s(n+2(s-1))|\xi|^{2(s-1)} \hat u(\xi)-4s\sum_{i=1}^n \partial_{\xi_i}\left\{\xi_i |\xi|^{2(s-1)} \hat u(\xi)\right\}-|\xi|^{2s} \Delta_{\xi} \hat u(\xi).
\end{split}
 \end{equation*}
Taking the inverse Fourier transform we obtain
\begin{equation*}
|x|^2(-\Delta_x)^su(x)=2s(n+2(s-1))(-\Delta_x)^{s-1}u(x)+4sXu(x)+(-\Delta_x)^s\{|x|^2 u(x)\}
\end{equation*}
which, in view of \eqref{equation102}, immediately yields the fourth identity in \eqref{commutator1}.\\
\end{proof}

\noindent
\textbf{Proof of theorem \ref{thm-formula-induction-sphere}: } By induction, it is clear that it is enough to show that
$$P_{1+ s }^{\mathbb S^n}=\lp P_1^{\mathbb S^n}+c_ s \rp P^{\mathbb S^n}_ s ,\quad \text{for}\quad c_ s =- s ( s +1).$$
Let $\mathcal P:=\lp P_1^{\mathbb S^n}+c_ s \rp P^{\mathbb S^n}_ s $. We claim that $\mathcal P$ is conformally covariant of order $1+ s $ in the sense of \eqref{conformal-covariance}, which, by uniqueness, will imply the proof of the theorem. Thus it is enough to show that $\mathcal P$ satisfies the conformal covariance identity
\be \mathcal P B^{ s +1-n/2} \phi^*=B^{- s -1-n/2}\phi^*(-\Delta)^{ s +1}.\label{conformal-invariant}\ee
For this, we first expand the left hand side of \eqref{conformal-invariant}. The idea is to use both the conformal invariance for the fractional Laplacian of exponent $ s $ \eqref{conformal-invariant-fractional} and for the standard conformal Laplacian \eqref{conformal-invariant-laplacian}, in order to relate the operator on $\mathbb S^n$ to the equivalent one on $\mathbb R^n$. We have
\bee\begin{split}
\text{(LHS)}&=\lp P_1^{\mathbb S^n}+c_ s \rp P^{\mathbb S^n}_ s  B^{ s +1-n/2} \phi^*\\
&=\lp P_1^{\mathbb S^n}+c_ s \rp P^{\mathbb S^n}_ s  B^{ s -n/2}\phi^* B_1\\
&=\lp P_1^{\mathbb S^n}+c_ s \rp B^{- s -n/2}\phi^*(-\Delta)^ s  B_1\\
&=\lp P_1^{\mathbb S^n}+c_ s \rp B^{1-n/2}\phi^* B_{-1- s }(-\Delta)^ s  B_1.
\end{split}\eee
Recalling again the conformal invariance for the conformal Laplacian $P^{\mathbb S^n}_1$ from \eqref{conformal-invariant-laplacian},
\be\label{formula100}\begin{split}
\text{(LHS)}&=\left[B^{-1-n/2}\phi^*(-\Delta)+c_ s  B^{1-n/2}\phi^*\right] B_{-1- s }(-\Delta)^ s  B_1\\
& =\phi^* B_{-1-n/2}\left[(-\Delta)+c_ s  B_2\right]B_{-1- s }(-\Delta)^ s  B_1.
\end{split}\ee
We next claim that
\be\label{claim1}\left[(-\Delta)+c_ s  B_2\right]B_{-1- s }(-\Delta)^ s  B_1=B_{- s } (-\Delta)^{ s +1},\ee
whose proof is presented below.
Therefore,  when we substitute the previous expression into \eqref{formula100} we obtain
\bee
\text{(LHS)} =\phi^* B_{-1-n/2- s } (-\Delta)^{ s +1},
\eee
which indeed implies \eqref{conformal-invariant} as we wished.\\

Now we give a proof for \eqref{claim1}. First note that
\begin{equation*}\begin{split}
(-\Delta)B_{-1- s }(-\Delta)^ s  B_1&=\left\{(-\Delta) B_{- s }\right\}B_{-1}(-\Delta)^ s  B_1 \\
&= B_{- s }(-\Delta)B_{-1}(-\Delta)^ s  B_1 +[(-\Delta), B_{- s }]B_{-1}(-\Delta)^ s  B_1
\end{split}\end{equation*}
and
\begin{equation*}\begin{split}
(-\Delta)B_{-1}(-\Delta)^ s   &=
(-\Delta)\left\{(-\Delta)^ s  B_{-1}+[B_{-1},(-\Delta)^ s ]\right\} \\
&= (-\Delta)^{1+ s } B_{-1}+(-\Delta)[B_{-1},(-\Delta)^ s ],
\end{split}\end{equation*}
so putting both expressions together yields
$$\left[(-\Delta)+c_ s  B_2\right]B_{-1- s }(-\Delta)^ s  B_1=B_{- s } (-\Delta)^{ s +1}+F_ s ,$$
where
$$F_ s :=B_{- s }(-\Delta)[B_{-1},(-\Delta)^ s ]B_1+[(-\Delta), B_{- s }]B_{-1}(-\Delta)^ s  B_1+c_ s  B_{1- s } (-\Delta)^ s  B_1.$$
A straightforward computation using the properties of the commutator from Lemma \ref{lemma-commutators} gives that
$F_ s \equiv 0$, and thus \eqref{claim1} is proved.

This concludes the proof of the Theorem.\\
\qed

From another point of view, on $\mathbb R^n$ with the Euclidean metric, the fractional Laplacian  for $s\in(0,1)$ can be computed as the principal value of the integral
\begin{equation}\label{fractional-Euclidean}(-\Delta)^ s  u(x)=C(n, s) \int_{\mathbb R^n} \frac{u(x)-u(\xi)}{\abs{x-\xi}^{n+2 s }}\;d\xi.\end{equation}
Our next objective is to give an analogous expression for $P_ s ^{\mathbb S^n}$ in terms of a singular integral operator, using stereographic projection in expression \eqref{fractional-Euclidean}:

\begin{prop}\label{prop-sphere-singular-integral}
Let $ s \in(0,1)$. Given $u(z)$ in $\mathcal C^\infty(\mathbb S^n)$, it holds
\begin{equation*}P_ s ^{\mathbb S^n} u(z)=\int_{\mathbb S^n} \left[u(z)-u(\zeta)\right] K_ s  (z,\zeta) \,d\zeta +A_{n, s }u(z),\end{equation*}
where the kernel $K_ s $ is given by
\begin{equation*}\label{kernel-sphere}K_ s (z,\zeta)=2^{s+n/2}C(n, s) \left(\frac{1-z_{n+1}}{1+z_{n+1}}\right)^{ s +n/2}
\left(\frac{1-\zeta_{n+1}}{1+\zeta_{n+1}}\right)^{ s +n/2} \frac{1}{(1-z\cdot \zeta)^{ s +n/2}}.
\end{equation*}
 and the (positive) constant
$$A_{n, s }=\frac{\Gamma\big(\frac{n}{2}+s\big)}{\Gamma\big(\frac{n}{2}-s\big)}.$$
\end{prop}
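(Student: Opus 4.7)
My plan is to transport the Euclidean singular integral formula \eqref{fractional-Euclidean} to the sphere through the stereographic projection $\phi$ by conjugating with the conformal covariance \eqref{conformal-invariant-fractional}. Given $u\in\mathcal C^\infty(\mathbb S^n)$, set $G(w):=(1+w_{n+1})^{n/2-s}$ and introduce the companion function $\tilde u$ on $\mathbb R^n$ defined by $u=B^{s-n/2}\phi^*\tilde u$, which explicitly means $\tilde u(\phi(\zeta))=G(\zeta)u(\zeta)$. Then \eqref{conformal-invariant-fractional} reads
$$P_s^{\mathbb S^n} u(z)=(1+z_{n+1})^{-s-n/2}(-\Delta)^s \tilde u(\phi(z)).$$

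Writing $x=\phi(z)$, $\xi=\phi(\zeta)$ and expanding $(-\Delta)^s\tilde u$ via \eqref{fractional-Euclidean}, I split the numerator
$$\tilde u(x)-\tilde u(\xi)=G(\zeta)\bigl[u(z)-u(\zeta)\bigr]+u(z)\bigl[G(z)-G(\zeta)\bigr],$$
so that the PV integral decomposes into two pieces, $I_1$ and $I_2$. Since $u$ is smooth on a compact manifold and $G\circ\phi^{-1}$ is smooth and decays like $|x|^{-(n-2s)}$ at infinity, each piece is absolutely convergent away from the diagonal, and the PV behavior at $\xi=x$ is inherited from that of $(-\Delta)^s$.

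For $I_1$ I change variables back to the sphere using $d\xi=(1+\zeta_{n+1})^{-n}d\zeta$ from \eqref{Jacobian}, together with the stereographic identities
$$1+z_{n+1}=\tfrac{2}{1+|x|^2},\qquad 1-z_{n+1}=\tfrac{2|x|^2}{1+|x|^2},\qquad |x-\xi|^2=\tfrac{2(1-z\cdot\zeta)}{(1+z_{n+1})(1+\zeta_{n+1})},$$
the last of which follows from a direct computation exploiting $|z'|^2=(1-z_{n+1})(1+z_{n+1})$. After multiplying by the outer factor $(1+z_{n+1})^{-s-n/2}$ and collecting powers of $(1\pm z_{n+1})$ and $(1\pm\zeta_{n+1})$, the integrand rearranges into $[u(z)-u(\zeta)]K_s(z,\zeta)$ as claimed. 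For $I_2$ the key observation is that $G\circ\phi^{-1}$ is exactly the companion $\tilde u_0$ that would be associated to the constant function $u_0\equiv 1$ on $\mathbb S^n$. Running \eqref{conformal-invariant-fractional} in reverse and invoking \eqref{curvature-sphere} one obtains
$$(-\Delta)^s(G\circ\phi^{-1})(x)=(1+z_{n+1})^{s+n/2}\,P_s^{\mathbb S^n}(1)(z)=A_{n,s}(1+z_{n+1})^{s+n/2},$$
so that after multiplying by $(1+z_{n+1})^{-s-n/2}$ the contribution of $I_2$ collapses to exactly $A_{n,s}u(z)$, producing the additive constant term and simultaneously identifying the value of the constant.

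The main obstacle is the algebraic bookkeeping in the $I_1$ step: one must carefully combine the conformal weights coming from the definition of $\tilde u$, from the Jacobian $d\xi/d\zeta$, and from the stereographic formula for $|x-\xi|$, and verify that they assemble precisely into the factors $\bigl(\tfrac{1-z_{n+1}}{1+z_{n+1}}\bigr)^{s+n/2}\bigl(\tfrac{1-\zeta_{n+1}}{1+\zeta_{n+1}}\bigr)^{s+n/2}$ of $K_s$. No analytic ingredient beyond the conformal covariance \eqref{conformal-invariant-fractional} and the smoothness of $u$ is required once the splitting above is in place.
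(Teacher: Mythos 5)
Your proof is correct and follows essentially the same route as the paper: conjugate $(-\Delta)^s$ by the stereographic conformal weight, split the numerator $\tilde u(x)-\tilde u(\xi)=G(\zeta)[u(z)-u(\zeta)]+u(z)[G(z)-G(\zeta)]$, change variables back to the sphere, and identify the two pieces. The only notable difference is in the final step: the paper derives the formula with an a priori unknown $z$-dependent factor $\tilde K_s(z)$ in place of $A_{n,s}$, remarks that it has not attempted a direct proof that $\tilde K_s$ is constant, and instead deduces the value by plugging in $u\equiv 1$ and comparing with the known curvature \eqref{curvature-sphere}. You instead compute the second piece $I_2$ directly, recognizing $G\circ\phi^{-1}$ as the companion of the constant function $1$ so that $(-\Delta)^s(G\circ\phi^{-1})(x)=(1+z_{n+1})^{s+n/2}Q_s^{\mathbb S^n}$ by conformal covariance and \eqref{curvature-sphere}. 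This is a marginally cleaner variant, as it yields $I_2=A_{n,s}u(z)$ without the indirection and simultaneously explains why the residual factor is $z$-independent; the content is the same, but you close a small expository gap the paper leaves open.
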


\begin{proof}
We recall the conformal covariance property for $P_s^{\mathbb S^n}$ from \eqref{conformal-covariance}
$$P^{\mathbb S^n}_ s  B^{ s -n/2}\phi^*=B^{- s -n/2}\phi^*(-\Delta)^s, $$
that for $u\in\mathcal C^{\infty}(\mathbb S^n)$ is equivalent to
\begin{equation*}\label{formula140}
P_ s ^{\mathbb S^n} u=B^{- s -n/2} \phi^*(-\Delta)^{ s } \left[ \phi_* B^{- s +n/2} u\right].\end{equation*}
From \eqref{fractional-Euclidean} we have
\begin{equation*}\label{formula130}
(-\Delta)^{ s } \left[  B_{- s +n/2} \phi_* u\right]=2^{- s +\frac{n}{2}}C(n, s) \int_{\mathbb R^n} \frac{(1+|x|^2)^{ s -n/2}u(\phi^{-1}(x))
-(1+|\xi|^2)^{ s -n/2}u(\phi^{-1}(\xi))}{\abs{x-\xi}^{n+2 s }}\;d\xi.\end{equation*}
We pull back to $\mathbb S^n$, with coordinates
$$x=\phi(z),\quad \zeta=\phi(\xi),$$
recalling the Jacobian of the transformation from \eqref{Jacobian}. Also
note that
$$\abs{x-\xi}^2=2\frac{1-z\cdot \zeta}{(1-z_{n+1})(1-\zeta_{n+1})}.$$
Therefore
\begin{equation*}\begin{split}
P_ s ^{\mathbb S^n} u(z)=C(n, s)2^{s+n/2} (1+z_{n+1})^{- s -n/2}\int_{\mathbb S^n} &\left[(1+z_{n+1})^{- s +\frac{n}{2}}u(z)
-(1+\zeta_{n+1})^{- s +\frac{n}{2}}u(\zeta)\right]\\
&\cdot\frac{(1-z_{n+1})^{ s +n/2}(1-\zeta_{n+1})^{ s +n/2}}{(1-z\cdot \zeta)^{ s +n/2}}(1+\zeta_{n+1})^{-n}\;d\zeta.
\end{split}\end{equation*}
Writing
$$u(z)=u(z)\frac{(1+\zeta_{n+1})^{- s +n/2}}{(1+z_{n+1})^{- s +n/2}}
+u(z)\left[1-\frac{(1+\zeta_{n+1})^{- s +n/2}}{(1+z_{n+1})^{- s +n/2}}\right],$$
we can arrive at
\begin{equation}\label{Pgamma1}P_ s ^{\mathbb S^n} u(z)=\int_{\mathbb S^n} \left[u(z)-u(\zeta)\right] K_ s  (z,\zeta) \,d\zeta +u(z) \tilde K_ s (z),\end{equation}
for
\begin{equation*}
\tilde K_s(z)=2^{\frac{n}{2}+s}C(n,s)\int_{\mathbb S^n} \frac{(1+z_{n+1})^{-s+\frac{n}{2}}-(1+\zeta_{n+1})^{-s+\frac{n}{2}}}{(1+z\cdot \zeta)^{s+\frac{n}{2}}}\frac{(1-z_{n+1})^{s+\frac{n}{2}}}{(1+z_{n+1})^{n}}
\frac{(1-\zeta_{n+1})^{s+\frac{n}{2}}}{(1+\zeta_{n+1})^{n}}\,d\zeta.
\end{equation*}
On the other hand, it is possible to show that $\tilde K_ s $ is constant in $z$. We have not attempted a direct proof; instead, we compare  \eqref{curvature-sphere} and \eqref{Pgamma1}  applied to $u\equiv 1$. As a consequence,
\begin{equation*}
\tilde K_s(z)\equiv\frac{\Gamma\big(\frac{n}{2}+s\big)}{\Gamma\big(\frac{n}{2}-s\big)}.
\end{equation*}
This yields the proof of the Proposition.
\end{proof}

% ==============================================================================================

\section{The conformal fractional Laplacian on the cylinder}\label{section:cylinder}

 Up to now, we have just considered conformally compact manifolds, for which the  conformal infinity $(M,[h])$ is compact. But one could also look at the non-compact case. This is, perhaps, one of the most interesting issues since the definition on of the fractional conformal Laplacian, being a non-local operator, is not clear when $M$ has singularities. In this section we consider the particular case when $M$ is a cylinder.

 Let $M=\mathbb R^n\backslash \{0\}$ with the cylindrical metric given by $$h_0:=\frac{1}{r^{2}}|dx|^2$$ for $r=|x|$. Use the Emden-Fowler change of variable $r=e^{-t}$, $t\in\mathbb R$, and remark that the Euclidean metric may be written as
 \begin{equation}\label{conformal-change-cylinder}|dx|^2=dr^2+r^2 h_{\mathbb S^{n-1}}=e^{-2t}[dt^2+h_{\mathbb S^{n-1}}]=:e^{-2t}h_0.
\end{equation}
Thus, in these new coordinates, $M$  may be identified with  the manifold $\mathbb R\times \mathbb S^{n-1}$ with the metric $h_0=dt^2+h_{\mathbb S^{n-1}}$.

The conformal covariance property \eqref{conformal-covariance} allows to formally write the conformal fractional Laplacian on the cylinder from the standard fractional Laplacian on Euclidean space. Indeed,
\begin{equation*}
P_s^{h_0}(v)=r^{\frac{n+2s}{2}} P^{|dx|^2}_{s}(r^{-\frac{n-2s}{2}} v)=r^{\frac{n+2s}{2}}
(-\Delta)^{s}u,
\end{equation*}
where we have set
\begin{equation}\label{uv}u=r^{-\frac{n-2s}{2}}v.\end{equation}
This relation also allows to calculate the fractional curvature of a cylinder. It is the (positive) constant
 \begin{equation}\label{constant-cylinder}
c_{n,s}:=Q_s^{h_0}=P_s^{h_0}(1)=r^{\frac{n+2s}{2}} (-\Delta)^s(r^{-\frac{n-2s}{2}}) =2^{2s}\left(\frac{\Gamma(\frac{1}{2}(\frac{n}{2}+s))}
{\Gamma(\frac{1}{2}(\frac{n}{2}-s))}\right)^2,\end{equation}
where the last equality is shown taking into account the Fourier transform of a homogeneous distribution.

 In \cite{DelaTorre-Gonzalez} the authors compute the principal symbol of the operator $P_s^{h_0}$ on $\mathbb R\times \mathbb S^{n-1}$ using the spherical harmonic decomposition for $\mathbb S^{n-1}$. This proof is close in spirit to the calculation we presented in the previous section for the sphere case, once we understand the underlying geometry. In fact, the standard cylinder $(\mathbb R \times \mathbb S^{n-1},h_0)$ is the conformal infinity of the Riemannian AdS space, which is another simple example of a Poincar\'e-Einstein manifold. AdS space may be described as the $(n+1)$-dimensional manifold with metric
\begin{equation*}
{g^+}=\rho^{-2}\lp d\rho^2+\left(1+\tfrac{\rho^2}{4}\right)^2dt^2+\left(1-\tfrac{\rho^2}{4}\right)^2 h_{\mathbb S^{n-1}}\rp,\end{equation*}
where  $\rho\in(0,2]$ and $t\in\mathbb R$. As in the sphere case, the calculation of the Fourier symbol of $P_s^{h_0}$ goes by reducing the scattering problem \eqref{equation-GZ}-\eqref{general-solution} to an ODE in the variable $\rho$ and then looking at its asymptotic behavior at $\rho=0$ and $\rho=2$. We will not present the proof of Theorem \ref{thm-symbol} below but refer to the original paper \cite{DelaTorre-Gonzalez}, since the new difficulties are of technical nature only.

 With some abuse of notation, let $\mu_m=m(m+n-2)$, $m=0,1,2,...$ be the eigenvalues of $-\Delta_{\mathbb S^{n-1}}$, repeated according to multiplicity. Then, any function $v$ on $\mathbb R\times \mathbb S^{n-1}$ may be decomposed as $\sum_{m} v_m(t) E_m$, where $\{E_m\}$ is a basis of eigenfunctions. Let
 \begin{equation*}\label{fourier}
\hat{v}(\xi)=\frac{1}{\sqrt{2\pi}}\int_{\mathbb R}e^{-i\xi \cdot t} v(t)\,dt
\end{equation*}
be our normalization for the one-dimensional Fourier transform. Then the operator $P_s^{h_0}$ diagonalizes under such eigenspace decomposition, and moreover, it is possible to calculate the Fourier symbol of each projection. More precisely:

\begin{thm}[\cite{DelaTorre-Gonzalez}]\label{thm-symbol}
 Fix $s\in (0,\tfrac{n}{2})$ and let $P^m_{s}$ be the projection of the operator $P^{h_0}_s$ over each eigenspace $\langle E_m\rangle$. Then
$$\widehat{P_s^m (v_m)}=\Theta^m_s(\xi) \,\widehat{v_m},$$
and this Fourier symbol is given by
\begin{equation}\label{symbol-cylinder}
\Theta^m_{s}(\xi)=2^{2s}\frac{\Big|\Gamma\big(\tfrac{1}{2}+\tfrac{s}{2}
+\tfrac{\sqrt{(\tfrac{n}{2}-1)^2+\mu_m}}{2}+\tfrac{\xi}{2}i\big)\Big|^2}
{\Big|\Gamma\big(\tfrac{1}{2}-\tfrac{s}{2}+\tfrac{\sqrt{(\tfrac{n}{2}-1)^2+\mu_m}}{2}
+\tfrac{\xi}{2}i\big)\Big|^2}.
\end{equation}
\end{thm}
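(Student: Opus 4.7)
My plan is to mirror the derivation of \eqref{intertwining-sphere} that was carried out for the sphere (via the ODE reduction of the scattering equation on $\mathbb{H}^{n+1}$), but now using AdS space as the Poincar\'e--Einstein filling of the cylinder. So I fix the metric
$$g^+=\rho^{-2}\Bigl(d\rho^2+\bigl(1+\tfrac{\rho^2}{4}\bigr)^2 dt^2+\bigl(1-\tfrac{\rho^2}{4}\bigr)^2 h_{\mathbb S^{n-1}}\Bigr),\qquad \rho\in(0,2],\ t\in\mathbb R,$$
and work from the scattering definition \eqref{normalized-scattering}, with the solution $w$ of \eqref{equation-GZ}--\eqref{general-solution} decomposed as
$$w(\rho,t,\theta)=\sum_m \int_{\mathbb R} e^{i\xi t}\,\widehat{w}_m(\rho,\xi)\,E_m(\theta)\,d\xi,$$
where $\{E_m\}$ is the eigenbasis of $-\Delta_{\mathbb S^{n-1}}$ with eigenvalues $\mu_m$.

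Next I would compute $-\Delta_{g^+}$ explicitly in these product coordinates. Because $t$ and the spherical variables appear in $g^+$ only through the warping factors $(1\pm\rho^2/4)^2$, acting with $-\Delta_{g^+}-s(n-s)$ on a single mode $e^{i\xi t}E_m(\theta)\,\widehat w_m(\rho,\xi)$ reduces the PDE to an ODE in $\rho$ whose coefficients depend on $\xi$ (through the factor $\xi^2(1+\rho^2/4)^{-2}$ replacing $-\partial_t^2$) and on $\mu_m$ (through $\mu_m(1-\rho^2/4)^{-2}$). This should be identifiable, after the usual substitution $z=\rho^2/4$, as a hypergeometric equation, with indicial roots at $\rho=0$ given by the exponents $n/2-s$ and $n/2+s$ dictated by \eqref{general-solution}, and with indicial roots at $\rho=2$ involving $\tfrac12\bigl(\pm\sqrt{(n/2-1)^2+\mu_m}\pm i\xi\bigr)$ coming from a short indicial computation at the interior point.

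With the explicit hypergeometric pair of solutions in hand, I would write the general solution as
$$\widehat w_m(\rho,\xi)=c_1\,\rho^{n/2-s}\varphi_1(\rho)+c_2\,\rho^{n/2+s}\varphi_2(\rho),$$
exactly as in \eqref{w_m}, and then impose smoothness at the interior point $\rho=2$ (the analogue of the center of the Poincar\'e ball in the sphere case). This is an identity of the form \eqref{equation100} which, after applying Gauss' formula
$$_2F_1(a,b,c,1)=\frac{\Gamma(c)\Gamma(c-a-b)}{\Gamma(c-a)\Gamma(c-b)}$$
to both hypergeometric evaluations at $\rho=2$, yields a quotient $c_2/c_1$ that is a ratio of Gamma functions in the complex parameters $\tfrac12 \pm \tfrac{s}{2}+\tfrac12 \sqrt{(n/2-1)^2+\mu_m}\pm\tfrac{\xi}{2}i$. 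Multiplying by the normalization $d_s=2^{2s}\Gamma(s)/\Gamma(-s)$ gives $\Theta^m_s(\xi)$. The absolute-value-squared form in \eqref{symbol-cylinder} then follows from the reflection identity $|\Gamma(a+ib)|^2=\Gamma(a+ib)\Gamma(a-ib)$, which appears naturally because the ODE has real coefficients so the two complex exponents at $\rho=2$ come in conjugate pairs $\pm\tfrac{\xi}{2}i$, and $\Gamma(-s)/\Gamma(s)$ can be absorbed using $\Gamma(z)\Gamma(1-z)=\pi/\sin(\pi z)$ together with standard rearrangements.

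The bookkeeping at $\rho=2$ is what I expect to be the main obstacle: unlike the sphere case, where only one parameter $\mu_m$ entered the hypergeometric parameters, here both $\mu_m$ and $\xi$ contribute, so the four arguments of the Gauss evaluation must be tracked carefully to produce the clean symmetric answer. Once this is done correctly and the $\Gamma(s)/\Gamma(-s)$ from $d_s$ is combined with the $\Gamma(\pm s)$ factors from the Gauss identity, the expression collapses into the form stated in \eqref{symbol-cylinder}, and a sanity check at $\xi=0$, $m=0$ reproduces the constant $c_{n,s}$ in \eqref{constant-cylinder}.
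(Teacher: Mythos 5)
The paper does not actually prove this theorem: it explicitly defers to \cite{DelaTorre-Gonzalez}, noting only that the strategy parallels the sphere computation of Section \ref{section:sphere} (decompose in spherical harmonics on $\mathbb S^{n-1}$ and Fourier-transform in $t$, reduce \eqref{equation-GZ} to an ODE in $\rho$, and read off the coefficient ratio from the asymptotics at $\rho=0$ and $\rho=2$), "since the new difficulties are of technical nature only." Your outline is faithful to this strategy, and the scaffolding (AdS metric, mode decomposition, appeal to Gauss' $_2F_1$-at-$1$ formula, multiplication by $d_s$, sanity check against \eqref{constant-cylinder}) is correct.

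However, there is a genuine technical gap at the heart of the outline: after the mode decomposition, the ODE in $\rho$ carries \emph{both} $\mu_m\,\rho^2(1-\tfrac{\rho^2}{4})^{-2}$ and $\xi^2\,\rho^2(1+\tfrac{\rho^2}{4})^{-2}$ terms. The substitution $z=\rho^2/4$ therefore does \emph{not} produce a hypergeometric equation directly; in addition to the regular singular points at $z=0,1,\infty$, there is a fourth regular singular point at $z=-1$ (the locus $1+\rho^2/4=0$, i.e.\ $\rho=\pm 2i$), which is where the $\xi^2$-term becomes singular. This also means your claimed indicial exponents at $\rho=2$, namely $\tfrac12\bigl(\pm\sqrt{(n/2-1)^2+\mu_m}\pm i\xi\bigr)$, are wrong: at $\rho=2$ the factor $1+\rho^2/4$ equals $2$ and is harmless, so (exactly as in the sphere case) the indicial equation there involves $\mu_m$ only, and $\xi$ cannot appear. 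The $\pm\tfrac{\xi}{2}i$ in \eqref{symbol-cylinder} comes instead from the exponents at $z=-1$; before Gauss' formula can be applied, one must first eliminate this extra singularity, e.g.\ by conjugating $w$ by a suitable power $(1+\tfrac{\rho^2}{4})^{\pm i\xi/2}$ (so that $z=-1$ becomes an apparent singularity) or by a further Möbius change of variable that collapses the four singular points to three. Only then is the equation genuinely hypergeometric, with the $\xi$-dependence residing in the parameters $a,b$ rather than in $c-a-b$, which is precisely why $\xi$ ends up inside $\Gamma(c-a)$, $\Gamma(c-b)$ and produces the modulus-squared combination $|\Gamma(\cdot+\tfrac{\xi}{2}i)|^2$. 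With this extra reduction supplied, the remainder of your argument (smoothness at $\rho=2$, Gauss' formula, $|\Gamma(a+ib)|^2=\Gamma(a+ib)\Gamma(a-ib)$, normalization by $d_s$) closes the computation correctly.
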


Let us restrict to the space of radial functions $v=v(t)$, which corresponds to the eigenspace with $m=0$, and denote $\mathscr L_s:=P^0_{s}$. Then the Fourier symbol of $\mathscr L_s$ is given by
\begin{equation*}\label{symbolk0}
\Theta^0_{s}(\xi)=2^{2s}\frac{\left|\Gamma(\tfrac{n}{4}+\tfrac{s}{2}+\tfrac{\xi}{2}i)\right|^2}
{\left|\Gamma(\tfrac{n}{4}-\tfrac{s}{2}+\tfrac{\xi}{2}i)\right|^2}.
\end{equation*}

Again, in parallel to Proposition \ref{prop-sphere-singular-integral} in the sphere case, it is possible to give a singular integral formulation for the pseudodifferential operator $\mathscr L_s$ acting on $\langle E_0\rangle$:

\begin{prop}[\cite{DelaTorre-DelPino-Gonzalez-Wei}] Given $v=v(t)$ smooth, $t\in\mathbb R$, we have for $\mathscr L_s$:
\begin{equation}\label{mathcalL}
 \mathscr{L}_{s}v(t)=C(n,s)P.V.   \int_{-\infty}^\infty
 (v(t)-v(\tau))K(t-\tau)\,d\tau + c_{n,s} v(t),
\end{equation}
for the kernel
\begin{equation*}
K({\xi})= c_n(\sinh {\xi})^{-1-2s} (\cosh {\xi})^{ \tfrac{2-n+2s}{2}} \mbox{ }_2 F_1 \left(\tfrac{A+1}{2}-B, \tfrac{A}{2} -B +1; A-B +1; (\textnormal{sech } {\xi})^2\right).
\end{equation*}
where $A= \tfrac{n+2s}{2}$, $B =1 +s$,
$c_n$ is a dimensional constant and the value of $c_{n,s}$ is given in \eqref{constant-cylinder}.\\
\end{prop}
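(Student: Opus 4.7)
My plan is to push the Euclidean singular-integral representation \eqref{fractional-Euclidean} of $(-\Delta)^s$ through the conformal correspondence \eqref{conformal-change-cylinder}. For radial $v=v(t)$, set $u(x)=|x|^{-(n-2s)/2}v(-\log|x|)$, so that the conformal covariance property yields $\mathscr{L}_s v(t)=r^{(n+2s)/2}(-\Delta)^s u(x)$ with $r=|x|=e^{-t}$. Plugging in \eqref{fractional-Euclidean}, passing to polar coordinates $y=\rho\theta$ with $\theta\in\mathbb{S}^{n-1}$, and setting $\rho=e^{-\tau}$, the geometric identity
\begin{equation*}
|x-y|^2 = 2e^{-(t+\tau)}[\cosh(t-\tau)-\cos\theta_1]
\end{equation*}
(where $\theta_1$ is the angle between $x$ and $y$) makes the integrand translation-invariant in $(t,\tau)$ and produces the convolution structure of \eqref{mathcalL}.

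\textbf{Evaluating the kernel.} Radiality of $u$ lets the spherical integration factor out; with $\mu=\cos\theta_1$ it collapses to the classical Gauss-type integral
\begin{equation*}
\int_{-1}^{1}\frac{(1-\mu^2)^{(n-3)/2}}{[\cosh\xi-\mu]^{(n+2s)/2}}\,d\mu, \qquad \xi=t-\tau.
\end{equation*}
A binomial expansion in $\mu/\cosh\xi$ followed by term-by-term integration against $(1-\mu^2)^{(n-3)/2}$ (Beta integrals on the even powers) evaluates this to a constant multiple of $(\cosh\xi)^{-(n+2s)/2}\,{}_2F_1\!\left(\tfrac{n+2s}{4},\tfrac{n+2s+2}{4};\tfrac{n}{2};\mathrm{sech}^2\xi\right)$. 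The Euler transformation ${}_2F_1(a,b;c;z)=(1-z)^{c-a-b}{}_2F_1(c-a,c-b;c;z)$, together with $1-\mathrm{sech}^2\xi=\tanh^2\xi$, then rewrites this as the hypergeometric factor of \eqref{mathcalL}, producing simultaneously the singular prefactor $(\sinh\xi)^{-1-2s}(\cosh\xi)^{(2-n+2s)/2}$ and the stated parameters $\tfrac{A+1}{2}-B=\tfrac{n-2s-2}{4}$, $\tfrac{A}{2}-B+1=\tfrac{n-2s}{4}$, $A-B+1=\tfrac{n}{2}$.

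\textbf{Bookkeeping and extraction of the constant.} With $A_0=\tfrac{n-2s}{2}$, $B_0=\tfrac{n+2s}{2}$, the accumulated exponentials from the prefactor $r^{(n+2s)/2}$, from $u(r)=e^{A_0 t}v(t)$, from the Jacobian $\rho^{n-1}d\rho=e^{-n\tau}d\tau$ (after reversing orientation), and from the scaling of $|x-y|^{-n-2s}$ combine through the cancellation $B_0-n=-A_0$ so that the coefficient of $v(\tau)$ in the integrand becomes $1$ and the coefficient of $v(t)$ becomes $e^{A_0(t-\tau)}$. Splitting
\begin{equation*}
e^{A_0\xi}v(t)-v(\tau) = (v(t)-v(\tau)) + (e^{A_0\xi}-1)\,v(t)
\end{equation*}
isolates the principal-value convolution of \eqref{mathcalL} against $K$ plus a constant multiple of $v(t)$; testing the resulting identity on $v\equiv 1$ and invoking $\mathscr{L}_s(1)=c_{n,s}$ from \eqref{constant-cylinder} pins this constant to $c_{n,s}$. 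The hardest step is the hypergeometric identification of the kernel: the direct power-series expansion yields parameters shifted from those in \eqref{mathcalL}, and only after the Euler transformation does one recover the exact $(\sinh\xi)^{-1-2s}$ singularity (the source of the principal-value interpretation) together with the displayed ${}_2F_1$ parameters.
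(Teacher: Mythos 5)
Your proof is correct and parallels exactly the method the paper itself uses for the analogous sphere result (Proposition 4.2): push the singular-integral form of $(-\Delta)^s$ through the conformal change, reduce the angular integral to a ${}_2F_1$ via binomial expansion and Beta integrals, apply the Euler transformation to produce the $(\sinh\xi)^{-1-2s}$ singularity with the stated parameters $\frac{A+1}{2}-B=\frac{n-2s-2}{4}$, $\frac{A}{2}-B+1=\frac{n-2s}{4}$, $A-B+1=\frac{n}{2}$, and pin the additive constant by testing on $v\equiv 1$ against $\mathscr L_s(1)=c_{n,s}$. The paper only cites \cite{DelaTorre-DelPino-Gonzalez-Wei} for this statement rather than proving it, but your argument is the natural one implied by the ``in parallel to Proposition \ref{prop-sphere-singular-integral}'' remark, and I have verified the hypergeometric bookkeeping and the exponential cancellation $B_0-n=-A_0$ making the $v(\tau)$ coefficient unity.
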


It can be shown (\cite{DelaTorre-DelPino-Gonzalez-Wei}) that the asymptotic behavior of this kernel $K$ is
\begin{eqnarray*}
 K({\xi})  &\sim&  |{\xi}|^{-1-2\gamma}\quad \mbox{as}\quad |\xi|\rightarrow 0,\\
K(\xi)&\sim& e^{-|\xi|\tfrac{n+2\gamma}{2}} \quad \mbox{as}\quad |\xi|\rightarrow \infty.
 \end{eqnarray*}
This shows that, at the origin, its singular behavior corresponds to that of the one-dimensional fractional Laplacian but, at infinity, it has a much faster decay. Levy processes arising from this type of generators are known as tempered stable processes; they combine both $\alpha$-stable (in the short range) and Gaussian (in the long range) trends. \\

% ===================================================================

Before we continue with this exposition let us mention  a related problem:  to construct solutions to the  fractional Yamabe problem on $\mathbb R^n$, $s\in(0,1)$, with an isolated singularity at the origin.  This means that one seeks positive solutions of
\begin{equation}\label{equation0}(-\Delta)^{s}u=c_{n,s}u^{\frac{n+2s}{n-2s}}\text{ in }\mathbb R^n \setminus \{0\},\end{equation}
where $c_{n,s}$ is any positive constant that will be normalized as in \eqref{constant-cylinder}, and such that
$$u(x)\to \infty\quad\text{as}\quad |x|\to 0.$$
For technical reasons, one needs to assume here that $n>2+2s$. Because of the well known extension theorem for the fractional Laplacian \eqref{equation0} is equivalent to the boundary reaction problem
\begin{equation}\label{equation1}\left\{
\begin{split}
-\divergence(y^{a}\nabla U)=0&\text{ in } \mathbb R^{n+1}_+,\\
U=u&\text{ on }\mathbb R^n\setminus\{0\},\\
-{d}^*_{s}\lim_{y\rightarrow 0}y^a\partial_y u=c_{n,s}u^{\frac{n+2s}{n-2s}}&\text{ on }\mathbb R^n\setminus\{0\}.
\end{split}\right.
\end{equation}

Our model for an isolated singularity is the cylindrical solution, given by $U_1=P *_x u_1$ with $u_1(r)=r^{-\frac{n-2s}{2}}$. In the recent paper \cite{CaffarelliJinSireXiong} the authors characterize all the nonnegative solutions to \eqref{equation1}. Indeed, if the origin is not a removable singularity, then $u(x)$ is radial in the $x$ variable and, if $u=U(\cdot,0)$, then near the origin one must have that
\begin{equation*}\label{asymptotics}
c_1r^{-\tfrac{n-2s}{2}}\leq u(x)\leq c_2r^{-\tfrac{n-2s}{2}},
\end{equation*}
where $c_1$, $c_2$ are positive constants.

Positive radial solutions for \eqref{equation0} have been studied in the papers \cite{DelaTorre-Gonzalez,DelaTorre-DelPino-Gonzalez-Wei}. It is clear from the above that one should look for solutions of the form \eqref{uv}
 for some  function $v=v(r)$ satisfying $0<c_1\leq v\leq c_2$. In the classical case $s=1$, equation \eqref{equation0} reduces to a standard second order ODE. However, in the fractional case it becomes a fractional order ODE and, as a consequence, classical methods cannot be directly applied here.

 One possible point of view is to rewrite
 \eqref{equation0} in the new metric $h_0$. Since the metrics $|dx|^2$ and $h_0$ are conformally related by \eqref{conformal-change-cylinder}, we prefer to use $h_0$ as a background metric and thus any conformal change may be rewritten as
$$h_v=u^{\frac{4}{n-2s}}|dx|^2=v^{\frac{4}{n-2s}}h_0,$$
 where $u$ and $v$ are related by \eqref{uv}. Then the original problem \eqref{equation0} is equivalent to the fractional Yamabe problem on $\mathbb R\times \mathbb S^{n-1}$: fixed $h_0$ as a background metric on $\mathbb R\times \mathbb S^{n-1}$, find a conformal metric $h_v$ of positive constant fractional curvature $Q^{h_v}_s$, i.e., find a positive smooth solution $v$ for
\begin{equation}\label{equation2}P_s^{h_0}(v)=c_{n,s} v^{\frac{n+2s}{n-2s}}\quad \text{on}\quad \mathbb R\times \mathbb S^{n-1}.\end{equation}
A complete study of radial solutions
$v=v(t)$, $0<c_1\leq v\leq c_2$, for this equation is not available  since is not an ODE. The local case $s=1$, however, is well known since it reduces to understanding the phase-portrait of a Hamiltonian ODE (see the lecture notes \cite{Schoen:notas}, for instance), and periodic solutions constructed in this way are known as Delaunay solutions for the scalar curvature.

 Fractional Delaunay solutions $v_L$ to equation \eqref{equation2}, i.e., radially symmetric periodic solutions in the variable  $t\in\mathbb R$ for a given period $L$, are constructed in \cite{DelaTorre-DelPino-Gonzalez-Wei} using a variational method that we sketch here: if $v$ is radial, then \eqref{equation2} is equivalent to
$$\mathscr{L}_{s} v=c_{n,s}v^{\frac{n+2s}{n-2s}}, \quad t\in\mathbb R,$$
where $\mathscr L_s$ is given in \eqref{mathcalL}. For periodic functions $v(t+L)=v(t)$ it can be rewritten as
\begin{equation}\label{equation-periodic}
\mathscr{L}_{s}^L v=c_{n,s}v^{\frac{n+2s}{n-2s}}, \quad t\in\mathbb [0,L],
\end{equation}
for
\begin{equation*}
 \mathscr{L}_{s}^L v(t)=C(n,s)P.V.   \int_{0}^L
 (v(t)-v(\tau))K_L(t-\tau)\,d\tau + c_{n,s} v(t),
\end{equation*}
and $K_L$ is a periodic singular kernel given by $K_L (\xi)= \sum_{j\in\mathbb Z}K(\xi- jL)$.\\

We find a bifurcation behavior at the value of $L$, denoted by $L_0$, where the first eigenvalue for the linearization of problem \eqref{equation-periodic} crosses zero. Moreover, for each period $L> L_0$ there exists a periodic solution $v_L$:

 \begin{thm}[\cite{DelaTorre-DelPino-Gonzalez-Wei}]
Consider the variational formulation for equation \eqref{equation-periodic},
written as
$$b(L)=\inf \left\{\mathcal{F}_L(v)\,:\, v\in H^s(0,L), \, v \,\text{is}\,L\text{-periodic}\right\}$$
where
\begin{equation*}
\mathcal{F}_L(v) =\frac{ C(n,s)\int_0^L \int_0^L (v(t)-v(\tau))^2 K_L (t-\tau) \,dt \,d\tau +c_{n,s} \int_0^L v(t)^2 \,dt}{ \big(\int_0^L v(t)^{2*} \,dt\big)^{2/{2^*}}}.
\end{equation*}
Then there is a unique $L_0>0$ such that $b(L)$ is attained by a nonconstant (positive) minimizer  $v_L$ when $L>L_0$ and when $L \leq L_0$  $b(L)$ is attained by the constant only.
\end{thm}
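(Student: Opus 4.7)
The plan is to combine a direct variational existence result with a Fourier-side bifurcation analysis at the constant solution.

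First I would establish existence of a minimizer $v_L$ for every $L>0$. The functional $\mathcal{F}_L$ is scale-invariant, so one may restrict to the Nehari-type constraint $\int_0^L v^{2^*}dt=1$. Because $n\geq 2$ and $s\in(0,1)$, one has $2^* = 2n/(n-2s) < 2/(1-2s)$ strictly, so the embedding $H^s(0,L)\hookrightarrow L^{2^*}(0,L)$ is compact. The term $c_{n,s}\int v^2$ together with the positivity of $K_L$ gives coercivity in the $H^s$ norm along any minimizing sequence, and a standard weak lower semicontinuity argument produces a minimizer $v_L$. Replacing $v_L$ by $|v_L|$ and invoking the strong maximum principle for the nonlocal operator $\mathscr{L}_s^L$, we may take $v_L\geq 0$, and the Euler--Lagrange equation \eqref{equation-periodic} then yields $v_L>0$.

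Second, the constant profile $v_c>0$ is always a critical point, with $\mathcal{F}_L(v_c)=c_{n,s}L^{2s/n}$ since $\mathscr{L}_s^L v_c=c_{n,s}v_c$. To locate the bifurcation value I compute the second variation at $v_c$ along directions $w$ tangent to the normalization constraint, i.e.\ with $\int_0^L w\,dt=0$. Expanding,
\begin{equation*}
\mathcal{F}_L(v_c+\epsilon w)=\mathcal{F}_L(v_c)+\epsilon^2\cdot\frac{L^{2s/n}}{v_c^2 L}\int_0^L w\,\bigl(\mathscr{L}_s^L-\lambda_c\bigr)w\,dt+O(\epsilon^3),\qquad \lambda_c:=c_{n,s}\,\tfrac{n+2s}{n-2s}.
\end{equation*}
By Theorem \ref{thm-symbol} (applied on the $m=0$ sector), $\mathscr{L}_s^L$ diagonalizes in the Fourier basis $\{e^{2\pi ikt/L}\}_{k\in\mathbb Z}$ with eigenvalues $\Theta^0_s(2\pi k/L)$. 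The constraint removes the $k=0$ mode, and since $\xi\mapsto\Theta^0_s(\xi)$ is continuous and strictly increasing from $\Theta^0_s(0)=c_{n,s}$ to $+\infty$, and $\lambda_c>c_{n,s}$, there is a unique $L_0>0$ solving
\begin{equation*}
\Theta^0_s\bigl(2\pi/L_0\bigr)=\lambda_c.
\end{equation*}

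Third, for $L>L_0$ the mode $w(t)=\cos(2\pi t/L)$ gives a negative second variation, so $\mathcal{F}_L(v_c+\epsilon w)<\mathcal{F}_L(v_c)$ for small $\epsilon>0$. Combined with the existence step, any minimizer must then be non-constant, giving the desired $v_L$.

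Finally, for $L\leq L_0$ I would upgrade the local information to global uniqueness as follows. Let $v$ be any minimizer. Rearranging $v$ to its symmetric decreasing rearrangement on the circle $\mathbb R/L\mathbb Z$ preserves $\int v^{2^*}$ and, since $K_L$ is even and monotone decreasing on $(0,L/2)$, decreases the quadratic part of the numerator by Riesz rearrangement. Hence we may assume $v$ is symmetric. Write $v=v_c+w$ with $\int_0^L w\,dt=0$; by Parseval and the symbol bound $\Theta^0_s(2\pi k/L)\geq \Theta^0_s(2\pi/L)\geq\lambda_c$ for all $|k|\geq 1$ when $L\leq L_0$, we get the sharp Poincaré-type inequality
\begin{equation*}
\int_0^L w\,\mathscr{L}_s^L w\,dt\;\geq\;\lambda_c\int_0^L w^2\,dt.
\end{equation*}
Pairing this with the Jensen estimate $(\int v^{2^*})^{2/2^*}\geq L^{-2s/n}\int v^2$, which is also an equality iff $w\equiv 0$, and a quantitative convexity bound on $t\mapsto t^{2^*/2}$ to control the remainder, yields $\mathcal{F}_L(v)\geq c_{n,s}L^{2s/n}$ with equality only if $w\equiv 0$. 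This is the step I expect to be the main obstacle: the Jensen inequality on the denominator points the wrong way relative to the nonlinear correction, so one needs a refined convexity estimate (or equivalently a quantitative stability version of Jensen) to close the inequality globally rather than merely infinitesimally; the case $L=L_0$ requires a separate limiting argument using the nondegeneracy along modes $|k|\geq 2$.
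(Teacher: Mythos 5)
Your steps (1)--(3) are essentially sound and very likely parallel the strategy in \cite{DelaTorre-DelPino-Gonzalez-Wei}: the subcriticality of $2^*=2n/(n-2s)$ relative to the one-dimensional $H^s(0,L)$ embedding gives compactness, the second-variation computation at the constant $v_c\equiv 1$ is correct (the coefficient $\lambda_c=c_{n,s}\tfrac{n+2s}{n-2s}=c_{n,s}(2^*-1)$ checks out), and the bifurcation threshold $\Theta_s^0(2\pi/L_0)=\lambda_c$ is the right place to look because $\Theta_s^0$ is strictly increasing, starting from $\Theta_s^0(0)=c_{n,s}<\lambda_c$. Identifying $L_0$ through the explicit Fourier symbol of Theorem~\ref{thm-symbol} is the natural route, and the conclusion for $L>L_0$ (constant is not a local minimizer, hence the existing minimizer is nonconstant) follows correctly.

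The genuine gap is your step (4), and you have correctly diagnosed why it does not close. To show $b(L)=\mathcal F_L(v_c)$ with the constant as unique minimizer for $L\le L_0$ you must prove
$\int v\,\mathscr L_s^L v \ge c_{n,s}L^{2s/n}\bigl(\int v^{2^*}\bigr)^{2/2^*}$
for \emph{all} admissible $v$, i.e.\ you need an upper bound on the denominator, whereas Jensen/H\"older furnish only a lower bound. The decomposition $v=\bar v+w$ with the Parseval--Poincar\'e bound $\int w\,\mathscr L_s^L w\ge\lambda_c\int w^2$ reproduces exactly the second-order balance: both sides expand to $c_{n,s}L\bar v^2+\lambda_c\int w^2+O(w^3)$, so the local bifurcation analysis is recovered but no global surplus is generated. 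At $L=L_0$ the quadratic surplus is zero and the cubic correction in the denominator (which is positive, by convexity of $t\mapsto t^{2^*}$) has the wrong sign for you; for $L<L_0$ the surplus $(\Theta_s^0(2\pi/L)-\lambda_c)\int w^2$ is $O(w^2)$ and can beat $O(w^3)$ only for $w$ small. So the estimate you sketch gives at best a \emph{local} uniqueness statement near the constant, and a phrase like ``quantitative convexity bound on $t\mapsto t^{2^*/2}$'' is not a substitute for the missing global argument. One needs an additional ingredient --- e.g.\ a large-deviation (Sobolev/concentration) bound for $\|w\|$ not small together with a matching argument, or a continuation/monotonicity argument in $L$ --- to rule out nonconstant minimizers far from the constant. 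As written, the step is an acknowledged hole rather than a proof.

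A smaller point: the rearrangement reduction requires not merely that $K$ is even and decreasing on $\mathbb R$ but that its periodization $K_L=\sum_j K(\cdot-jL)$ is symmetric decreasing on $[0,L/2]$; this does not follow from monotonicity of $K$ alone (one typically needs convexity of $K$ or a direct verification). Since $K$ is given by a hypergeometric expression with $|\xi|^{-1-2s}$ blow-up at the origin and exponential decay, this is plausible but should be justified before invoking the Riesz rearrangement inequality on the circle.
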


Such $v_L$ for $L>L_0$ are known as the Delaunay solutions for the fractional curvature, and they can be characterized almost explicitly. We remark that, geometrically, the constant solution $v_{L_0}$  corresponds to the standard cylinder, while $v_L\to v_\infty$ as $L\to \infty$, where
 $v_\infty(t)=c(\cosh (t))^{-\frac{n-2s}{2}}$ corresponds to a standard sphere (i.e., the bubble solution \eqref{bubbles}, normalized accordingly). For other values of $L$ we have a characterization as a bubble tower; in fact:

\begin{prop}[\cite{Ao-DelaTorre-Gonzalez-Wei}]
We have that
\begin{equation}\label{bubble-tower}v_L=\sum_{j\in \mathbb Z}v_\infty(\cdot-jL)+\phi_L,$$ where
$$\|\phi_L\|_{H^s(0,L)}\to 0\quad\text{as}\quad L\to \infty.\end{equation}
Moreover, for $L$ large we have the following Holder estimates on $\phi_L$:
\begin{equation*}
\|\phi_L\|_{\mathcal C^{\alpha}([0,L])}\leq Ce^{-\frac{(n-2s)L}{4}(1+\xi)}
\end{equation*}
for some $\alpha\in (0,1)$ and $\xi>0$ independent of $L$.
\end{prop}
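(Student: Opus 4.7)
The plan is to carry out a Lyapunov--Schmidt reduction centred at the infinite bubble tower. Throughout I use that $v_\infty(t)=c\,(\cosh t)^{-(n-2s)/2}$ solves $\mathscr{L}_s v_\infty = c_{n,s}v_\infty^{(n+2s)/(n-2s)}$ on the whole line (the cylindrical form of the standard bubble \eqref{bubbles}) and satisfies $v_\infty(t)\sim e^{-\frac{n-2s}{2}|t|}$ at infinity.

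First I would take as approximate solution $V_L(t):=\sum_{j\in\mathbb Z}v_\infty(t-jL)$, which is $L$-periodic by construction, and control the error
\begin{equation*}
\mathcal E_L := \mathscr{L}_s^L V_L - c_{n,s}\,V_L^{(n+2s)/(n-2s)}.
\end{equation*}
Since each translate $v_\infty(\cdot-jL)$ solves the full-line equation, $\mathcal E_L$ is produced entirely by the nonlinear interaction of neighbouring bubbles on the fundamental cell $[0,L]$, together with a small correction coming from the difference between $\mathscr{L}_s^L$ and $\mathscr{L}_s$ which is governed by the tail of the kernel $K_L - K$. Using the exponential decay of $v_\infty$ and the asymptotic $K(\xi)\sim e^{-|\xi|(n+2s)/2}$ recorded earlier, both contributions are of order $e^{-\frac{n-2s}{2}L}$ in a suitable weighted norm.

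Next I would set $v_L=V_L+\phi_L$ and linearise, so that $\phi_L$ must solve
\begin{equation*}
\mathcal L_L\phi_L := \mathscr{L}_s^L\phi_L - \tfrac{n+2s}{n-2s}\,c_{n,s}\,V_L^{\frac{4s}{n-2s}}\phi_L = -\mathcal E_L + \mathcal N_L(\phi_L),
\end{equation*}
with $\mathcal N_L$ quadratic in $\phi$. The decisive step is a uniform-in-$L$ linear theory for $\mathcal L_L$: the single-bubble linearisation $\mathscr{L}_s - p v_\infty^{p-1}$ is nondegenerate with kernel spanned by $v_\infty'$ (as in D\'avila--del~Pino--Sire--type results for the fractional Laplacian), so the natural approximate kernel of $\mathcal L_L$ is spanned by $Z_L:=\sum_j v_\infty'(\cdot-jL)$. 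I would prove that $\mathcal L_L$ is uniformly invertible on the $L^2(0,L)$-orthogonal complement of $Z_L$ by superposing the single-bubble estimates and controlling the cross terms via the exponential decay of $K_L$. A contraction argument in a weighted $H^s(0,L)$-ball then yields a unique $\phi_L\perp Z_L$ with $\|\phi_L\|_{H^s(0,L)}\lesssim \|\mathcal E_L\|_*\to 0$; the Lagrange multiplier in the $Z_L$-direction is suppressed by imposing the reflection symmetry $v_L(-t)=v_L(t)$, compatible with $V_L$. Finally, bootstrapping through the nonlocal Schauder regularity for $\mathscr{L}_s^L$ (from $H^s$ into $L^\infty$ and then into $C^\alpha$), together with one further iteration of the linear estimate exploiting the size of $V_L^{4s/(n-2s)}$ in the overlap region, upgrades the bound to
\begin{equation*}
\|\phi_L\|_{C^\alpha([0,L])} \le C\,e^{-\frac{(n-2s)L}{4}(1+\xi)},
\end{equation*}
the factor $(n-2s)/4$ being exactly the size of $v_\infty(L/2)$ that measures the overlap of consecutive bubbles, and the extra $\xi>0$ coming from the second iteration.

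The hard part will be the uniform-in-$L$ invertibility of $\mathcal L_L$ on the approximate cokernel. Because $\mathscr{L}_s^L$ is genuinely nonlocal and couples every fundamental cell to all the others through the tails of $K_L$, one cannot simply localise around a single bubble, and one must show that these cross-bubble contributions decay fast enough---at precisely the rate $e^{-|\xi|(n+2s)/2}$ encoded in the asymptotics of $K$---so that the single-bubble spectral gap survives the periodisation. This is the fractional, infinite-sum analogue of the standard gluing obstacle, and its resolution is what forces the explicit exponential rate appearing in the H\"older estimate.
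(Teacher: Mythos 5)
The survey does not actually include a proof of this Proposition --- it is quoted from \cite{Ao-DelaTorre-Gonzalez-Wei} --- so there is nothing in the paper to compare against line by line; a Lyapunov--Schmidt reduction around the periodized bubble is indeed the natural route and is likely close in spirit to the source. However, as written, your argument has a genuine gap. The $v_L$ in the Proposition is the \emph{specific} Delaunay solution already produced as the minimizer of $\mathcal{F}_L$ in the preceding Theorem; it is not yours to define. You first write ``set $v_L = V_L + \phi_L$ and linearise,'' treating $\phi_L$ as determined by the given $v_L$, and then say ``a contraction argument \ldots yields a unique $\phi_L$,'' which instead \emph{constructs} a brand new periodic solution near the bubble tower. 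Those are two different theorems. Either you work with the given minimizer $v_L$ and prove an a priori bound on $\phi_L := v_L - V_L$ (for which a fixed-point contraction is not the right tool; you would first have to show the minimizer enters a small ball around $V_L$ as $L \to \infty$, using the variational convergence $v_L \to v_\infty$ and energy comparison), or you build a periodic solution by Lyapunov--Schmidt and then supply a local uniqueness statement (nondegeneracy of $v_\infty$ plus the implicit function theorem gives uniqueness of even, positive, $L$-periodic solutions near $V_L$) together with an argument putting the variational $v_L$ inside that uniqueness neighbourhood. Without one of these bridges you have produced a solution with the bubble-tower shape but have not characterized the $v_L$ of the statement.

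A secondary issue is the claimed rate $\|\mathcal{E}_L\| \sim e^{-\frac{n-2s}{2}L}$. Near the midpoint $t = L/2$ the two adjacent translates of $v_\infty$ are of the same size $v_\infty(L/2) \sim e^{-\frac{(n-2s)L}{4}}$, and the nonlinear interaction there is of order $v_\infty(L/2)^{p}$ with $p = \frac{n+2s}{n-2s}$, i.e.\ $e^{-\frac{n+2s}{4}L}$. Whether $e^{-\frac{n-2s}{2}L}$ or $e^{-\frac{n+2s}{4}L}$ dominates depends on the sign of $n - 6s$, so the bound $e^{-\frac{n-2s}{2}L}$ for $\mathcal{E}_L$ is not uniformly correct. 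In either regime one does get $\|\mathcal{E}_L\| \lesssim e^{-\frac{(n-2s)L}{4}(1+\xi_0)}$ for some $\xi_0 > 0$, which is exactly the form of the stated H\"older estimate; but you should compute the overlap error more carefully rather than asserting the optimistic rate.
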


Finally, Delaunay-type solutions can be used in gluing problems since they model isolated singularities. In \cite{Ao-DelaTorre-Gonzalez-Wei} the authors modify the methods in \cite{Schoen:singular-solutions,Mazzeo-Pacard:isolated} to construct complete metrics on the sphere of constant scalar curvature with a finite number of isolated singularities:

\begin{thm}[\cite{Ao-DelaTorre-Gonzalez-Wei}]
Let $\Lambda=\{p_1,\ldots,p_N\}$ be a set of $N$ points in $\mathbb R^n$. There exists a smooth positive solution $u$ for the problem
\begin{equation}\label{singular-Yamabe}
\left\{\begin{split}
&(-\Delta)^{s}u=u^{\frac{n+2s}{n-2s}}\quad\text{in }\mathbb R^n \setminus \Lambda,\\
&u(x)\to \infty \quad \text{as }x\to \Lambda.
\end{split}\right.
\end{equation}
\end{thm}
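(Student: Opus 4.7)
The plan is to follow the classical gluing scheme of Schoen and Mazzeo--Pacard, using the fractional Delaunay solutions $v_L$ from the previous proposition as local models near each puncture. The proof splits into three steps: (i) build an approximate solution $u_0$ by superimposing and cutting off Delaunay models, (ii) show that the linearization of $(-\Delta)^s - (\cdot)^{\frac{n+2s}{n-2s}}$ at $u_0$ is uniformly invertible modulo the translation/Delaunay kernel after adjusting a finite set of geometric parameters, and (iii) solve the nonlinear equation by a contraction-mapping argument in a suitable weighted H\"older space.

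First I would build $u_0$. Around each $p_i \in \Lambda$ introduce Emden--Fowler coordinates $t_i = -\log|x - p_i|$, $\theta_i \in \mathbb{S}^{n-1}$; by \eqref{conformal-change-cylinder}--\eqref{equation2}, equation \eqref{singular-Yamabe} on a punctured neighborhood of $p_i$ is conformally equivalent to $P_s^{h_0}(v) = c_{n,s} v^{\frac{n+2s}{n-2s}}$ on the end of a cylinder, and admits the local singular model $U_i^{\mathrm{Del}}(x) = |x-p_i|^{-\frac{n-2s}{2}} v_{L_i}(t_i)$ for any Delaunay period $L_i > L_0$. One then glues the $U_i^{\mathrm{Del}}$ to the trivial background $0$ (or to a fixed positive smooth function, to preserve positivity) on annular transition regions $\{c_1 \le |x-p_i| \le c_2\}$ via a cutoff. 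Choosing the $L_i$ large and the transitions in the region where the bubble-tower description \eqref{bubble-tower} is accurate ensures that the resulting $u_0$ is a genuine approximate solution whose error
\[
E(u_0) := (-\Delta)^s u_0 - u_0^{\frac{n+2s}{n-2s}}
\]
is small in a suitably weighted norm.

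Next I would analyze the linearized operator
\[
L_{u_0} \phi = (-\Delta)^s \phi - \tfrac{n+2s}{n-2s} \, u_0^{\frac{4s}{n-2s}} \phi
\]
on weighted H\"older spaces $\mathcal{C}^{2s,\alpha}_{\delta,\nu}(\mathbb{R}^n \setminus \Lambda)$ with prescribed asymptotic rates $\delta$ at each puncture and $\nu$ at infinity. After passing to the cylinder near each $p_i$, $L_{u_0}$ becomes a perturbation of the linearization of $\mathscr{L}_s$ at the Delaunay solution $v_{L_i}$, whose indicial roots can be read off from the Fourier symbol \eqref{symbol-cylinder}. Choosing $(\delta,\nu)$ strictly between admissible indicial roots makes $L_{u_0}$ Fredholm, with finite-dimensional cokernel generated by the Jacobi fields (translations, change of Delaunay neck, phase). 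These are absorbed by enlarging the ansatz with free parameters at each $p_i$ (position shifts and the neck/phase of $v_{L_i}$), giving a right inverse $G_{u_0}$ with norm bounded independently of the parameters in the admissible range.

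The fixed point step then consists in writing $u = u_0 + \phi$ and solving
\[
\phi = -G_{u_0}\bigl(E(u_0) + Q(u_0,\phi)\bigr),
\]
where $Q(u_0,\phi) = (u_0+\phi)^{\frac{n+2s}{n-2s}} - u_0^{\frac{n+2s}{n-2s}} - \tfrac{n+2s}{n-2s} u_0^{\frac{4s}{n-2s}} \phi$ is the superquadratic remainder. Provided the $L_i$ are taken sufficiently large and the punctures sufficiently separated, $E(u_0)$ is small and $Q$ is locally Lipschitz with small constant on a ball of the weighted space, so a Banach contraction yields a unique small $\phi$; positivity of $u = u_0 + \phi$ follows from positivity of $u_0$ and smallness of $\phi$. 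The main obstacle will be the nonlocality of $(-\Delta)^s$: the error produced by cutting off one Delaunay bubble is not supported on an overlap annulus but decays polynomially across all of $\mathbb{R}^n$, so both the estimate of $E(u_0)$ and the Fredholm theory for $L_{u_0}$ require careful tail bounds and a detailed spectral analysis of the linearized Delaunay operator, where the symbolic information from Theorem \ref{thm-symbol} and the exponential decay of $\phi_L$ in \eqref{bubble-tower} are used crucially.
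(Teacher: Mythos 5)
Your proposal is a genuine Mazzeo--Pacard style gluing: treat each Delaunay solution $v_{L_i}$ as a monolithic local model, set up Fredholm theory for the linearized operator on weighted H\"older spaces near each cylindrical end (identifying indicial roots from \eqref{symbol-cylinder}), absorb the finite-dimensional cokernel by geometric parameters, and close with a Banach contraction. The paper takes a structurally different route. Rather than viewing $v_{L_i}$ as a whole, it exploits the bubble-tower decomposition \eqref{bubble-tower}, $v_L = \sum_{j} v_\infty(\cdot - jL) + \phi_L$, and runs a Lyapunov--Schmidt reduction in the spirit of Malchiodi in which \emph{each individual bubble} in each tower is perturbed and translated independently. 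The linear analysis then reduces to the well-understood mapping properties of $(-\Delta)^s$ near a single standard bubble, rather than to the Fredholm/indicial theory for the nonlocal linearized Delaunay operator on a cylinder -- which is precisely the object whose spectral and weighted-space theory is not off-the-shelf in the fractional setting, and which your step (ii) asserts without justification. The price the paper pays is bookkeeping: the projected problem produces an infinite-dimensional Toda-type system of compatibility conditions linking consecutive bubbles, which must be solved; the reward is that this system imposes constraints only on the neck sizes $L_i$ and none on the locations $p_i$, yielding the clean statement of the theorem. Your route, if it could be made rigorous, would give a perhaps more geometric picture, but the nonlocality you flag at the end is not merely a technical nuisance to be overcome with ``careful tail bounds'': establishing the weighted Fredholm theory for the linearization at a fractional Delaunay solution is a substantial open analytic problem, and the paper's bubble-by-bubble reduction is designed precisely to bypass it.
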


The proof of this theorem consists of a Lyapunov-Schmidt reduction involving a different perturbation of each bubble in the bubble tower \eqref{bubble-tower}, in the spirit of \cite{Malchiodi}. Note that the compatibility conditions that arise come from an infinite-dimensional Toda-type system; in addition, they do not impose any restriction on the location of the singular points, only on the neck sizes $L$ at each singularity.

\section{The non-compact case}\label{section-noncompact}

Once the isolated singularities case has been reasonably well understood, we turn to the study of higher dimensional singularities. From the analysis point of view, one wishes to understand the semilinear problem
\begin{equation}\label{singular-Yamabe}
\left\{\begin{split}
&(-\Delta)^{s}u=c\,u^{\frac{n+2s}{n-2s}}\text{ in }\mathbb R^n \setminus \Lambda,\\
&u(x)\to \infty \text{ as }x\to \Lambda,
\end{split}\right.
\end{equation}
where $\Lambda$ is a closed set of Hausdorff dimension $0<k<n$ and $c\in\mathbb R$. The first difficulty one encounters is precisely how to define the fractional Laplacian $(-\Delta)^s$ on $\Omega:=\mathbb R^n\setminus \Lambda$ since it is a non-local operator. Nevertheless, as in the cylinder case, this is better understood from the conformal geometry point of view. \\

In order to put \eqref{singular-Yamabe} into a broader context, let us give a brief review of the classical singular Yamabe problem ($s=1$).
Let $(M,h)$ be a compact $n$-dimensional Riemannian manifold, $n \geq 3$, and $\Lambda \subset M$ is any
closed set as above. We are concerned with the existence and geometric properties of
complete (non-compact) metrics of the form $h_u = u^{\frac{4}{n-2}}h$ with constant scalar curvature. This corresponds to solving
the partial differential equation (recall \eqref{Yamabe-equation})
\begin{equation*}
-\Delta_{h} u + \tfrac{n-2}{4(n-1)} R_{h} u = \tfrac{n-2}{4(n-1)}R\, u^{\frac{n+2}{n-2} }, \quad u > 0,
\end{equation*}
where $ R_{h_u}\equiv R$ is constant and with a boundary condition that $u \to \infty$ sufficiently quickly at $\Lambda$ so that
$h_u$ is complete. It is known
that solutions with $R< 0$ exist quite generally if $\Lambda$ is large in a capacitary
sense (\cite{Loewner-Nirenberg,Lab}), whereas for $R > 0$ existence is only known when $\Lambda$ is a smooth
submanifold (possibly with boundary) of dimension $k < (n-2)/2$ (\cite{MP,F}).

There are both analytic and geometric motivations for studying this problem. For example, in the positive case ($R > 0$), solutions
to this problem are actually weak solutions across the singular set (\cite{SY}), so these results fit into the broader investigation of
possible singular sets of weak solutions of semilinear elliptic equations.

On the geometric side, a well-known theorem by Schoen and Yau (\cite{SY,Schoen-Yau:book}) states that if $(M,h)$ is a compact manifold with a locally conformally flat metric $h$ of
positive scalar curvature, then the developing map $D$ from the universal cover $\widetilde{M}$ to $\mathbb S^n$, which
by definition is conformal, is injective, and moreover, $\Lambda := \mathbb S^n \setminus D(\widetilde{M})$ has Hausdorff
dimension less than or equal to $(n-2)/2$. Regarding the lifted metric $\tilde{h}$ on $\widetilde{M}$ as a metric
on $\Omega$, this provides an interesting class of solutions of the singular Yamabe problem which are periodic with respect to a
Kleinian group, and for which the singular set $\Lambda$ is typically nonrectifiable. More generally, they also
show that if $h_{\mathbb S^n}$ is the canonical metric on the sphere and if $h= u^{\frac{4}{n-2}}h_{\mathbb S^n}$ is a complete metric
with positive scalar curvature and bounded Ricci curvature on a domain $\Omega = \mathbb S^n \setminus \Lambda$, then
$\dim \Lambda \leq (n-2)/2$.\\

Going back to the non-local case, although it is not at all clear how to define $P_s^{\tilde h}$ and $Q_s^{\tilde h}$ on a general complete (non-compact) manifold $(\Omega,\tilde h)$, in the paper \cite{Gonzalez-Mazzeo-Sire} the authors  give a reasonable definition when $\Omega$ is an open dense set in a compact manifold $M$ and the metric $\tilde h$ is conformally related to a smooth metric $h$ on $M$. Namely, one can define them by demanding that the
conformal property \eqref{conformal-covariance} holds (as usual,
we assume that a Poincar\'e-Einstein filling $(X,g^+)$ has been fixed). Note, however, that this is not as simple as it first appears since, because of
the nonlocal character of $P_s^{\tilde h}$, we must extend $u$ as a distribution on all of $M$.  There is no difficulty
in using the relationship \eqref{conformal-covariance} to define $P_s^{\tilde h} \phi$ when $\phi \in \mathcal C^\infty_0(\Omega)$.
From here one can use an abstract functional analytic argument to extend $P_s^{\tilde h}$ to act on any $\phi \in L^2(\Omega, dv_{\tilde h})$.
Indeed, it is straightforward to check that the operator $P_s^{\tilde h}$ defined in this way is essentially self-adjoint on
$L^2(\Omega, dv_{\tilde h})$ when $s$ is real. However, observe that $P_s^{\tilde h} = (-\Delta_{\tilde h})^{s} + \mathcal K$, where
$\mathcal K$ is a pseudo-differential operator of order $2s-1$. Furthermore, $(-\Delta_{\tilde h})^{s}$ is self-adjoint. Since $\mathcal K$ is a lower order symmetric perturbation, then $P_s^{\tilde h}$ is also essentially self-adjoint.

Another interesting development is \cite{Guillarmou-Qing}, where they give a sharp spectral characterization of  Poincar\'e-Einstein manifolds with conformal infinity of positive Yamabe type.\\

The singular fractional Yamabe problem on $(M,[h])$ is then formulated as
\begin{equation}\label{singular-Yamabe1}
\left\{\begin{split}
&P^{h}_{s}u=c u^{\frac{n+2s}{n-2s}}\quad\text{in }M \setminus \Lambda,\\
&u(x)\to \infty \quad\text{as }x\to \Lambda,
\end{split}\right.
\end{equation}
for $c\equiv Q_s^{\tilde h}$ constant.
A separate, but also very interesting issue, is whether $c>0$ implies that the conformal factor
$u$ is actually a weak solution of \eqref{singular-Yamabe1} on all of $M$.\\

The first result in \cite{Gonzalez-Mazzeo-Sire} partially generalizes Schoen-Yau's theorem:

\begin{thm}[\cite{Gonzalez-Mazzeo-Sire}]
Suppose that $(M^n,h)$ is compact and $h_u = u^{\frac{4}{n-2s}}h$ is a complete metric on $\Omega = M \setminus
\Lambda$,  where $\Lambda$ is a smooth $k$-dimensional submanifold in $M$.  Assume furthermore that $u$ is polyhomogeneous
along $\Lambda$ with leading exponent $-n/2 + s$. If $s\in\left(0,\frac{n}{2}\right)$, and if $Q_s^h > 0$ everywhere
for any choice of asymptotically Poincar\'e-Einstein extension $(X,g^+)$  then $n$, $k$ and $s $ are restricted by the inequality
\begin{equation}
\Gamma\lp\frac{n}{4} - \frac{k}{2} + \frac{s}{2}\rp \Big/ \Gamma\lp\frac{n}{4} - \frac{k}{2} - \frac{s}{2}\rp > 0.
\label{eq:dimrest}
\end{equation}
This inequality holds in particular when
\begin{equation}\label{dim}
k < \frac{n-2s}{2},\end{equation}
and in this case then there is a unique distributional extension of $u$ on all of $M$ which is still a solution of \eqref{singular-Yamabe1} on all of $M$.
\end{thm}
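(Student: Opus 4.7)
The strategy is a leading-order matching near $\Lambda$ in the spirit of Schoen--Yau: the polyhomogeneous hypothesis pins down the leading behaviour of $u$, and the curvature equation $P_s^h(u) = Q_s^{h_u}\, u^{(n+2s)/(n-2s)}$ forces a compatibility between the leading coefficients that, combined with $Q_s^{h_u} > 0$, yields the sign constraint. I would first work in Fermi coordinates around a point $p \in \Lambda$, writing $x = (y,z)$ with $y \in \mathbb{R}^k$, $z \in \mathbb{R}^{n-k}$, and $r := |z|$ the distance to $\Lambda$. In these coordinates $h$ is Euclidean to leading order in $r$, and the polyhomogeneous assumption gives $u(y,z) = c_0(y)\, r^{-(n-2s)/2} + (\text{strictly subdominant in } r)$ with $c_0 > 0$.

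The heart of the argument is computing the leading contribution of $P_s^h$ applied to this model profile. Because $P_s^h$ has the same principal symbol as $(-\Delta_h)^s$ and $h$ is Euclidean to leading order, the most singular part of $P_s^h(r^{-(n-2s)/2})$ agrees with $(-\Delta_{\mathbb{R}^n})^s (|z|^{-(n-2s)/2})$. Since this model profile depends only on $z$, functional calculus in the tangential variable $y$ reduces the operator to $(-\Delta_{\mathbb{R}^{n-k}})^s$, and a direct Fourier computation on homogeneous distributions gives
\[
(-\Delta_{\mathbb{R}^{n-k}})^s \bigl(|z|^{-(n-2s)/2}\bigr) = D\, |z|^{-(n+2s)/2},
\]
\[
D = 2^{2s}\, \frac{\Gamma\!\bigl(\tfrac{n}{4}-\tfrac{k}{2}+\tfrac{s}{2}\bigr)\,\Gamma\!\bigl(\tfrac{n}{4}+\tfrac{s}{2}\bigr)}{\Gamma\!\bigl(\tfrac{n}{4}-\tfrac{s}{2}\bigr)\,\Gamma\!\bigl(\tfrac{n}{4}-\tfrac{k}{2}-\tfrac{s}{2}\bigr)}.
\]
The factor $\Gamma(n/4+s/2)/\Gamma(n/4-s/2)$ is strictly positive for $s\in(0,n/2)$, so $\operatorname{sgn}(D)$ coincides with the sign of the Gamma ratio in \eqref{eq:dimrest}. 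As a sanity check, setting $k=0$ recovers the cylindrical constant $c_{n,s}$ from \eqref{constant-cylinder}. Matching leading terms in the curvature equation yields
\[
D\, c_0(y) = Q_s^{h_u}\, c_0(y)^{(n+2s)/(n-2s)},
\]
and positivity of $c_0$ and $Q_s^{h_u}$ forces $D>0$, which is exactly \eqref{eq:dimrest}.

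For the supplementary assertion, the inequality $k<(n-2s)/2$ is equivalent to $n/4-k/2-s/2>0$, placing all four Gamma arguments on the positive real axis, so \eqref{eq:dimrest} is automatic. Under this same dimensional bound, integrating in a tubular neighbourhood shows $u \in L^1_{\mathrm{loc}}(M)$ and $u^{(n+2s)/(n-2s)} \in L^1_{\mathrm{loc}}(M)$, so both sides of the curvature equation make sense distributionally on all of $M$. The canonical extension is the $L^1_{\mathrm{loc}}$ representative produced by the polyhomogeneous expansion; uniqueness follows because any two $L^1_{\mathrm{loc}}$ extensions differ by a distribution supported on $\Lambda$, and no nonzero distribution supported on a set of zero Lebesgue measure is locally integrable.

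The main technical obstacle will be rigorously justifying the leading-order computation: one must show that the lower-order pseudodifferential corrections $P_s^h - (-\Delta_h)^s$ and the metric perturbations of $(-\Delta_h)^s$ away from the Euclidean model contribute only strictly less singular powers of $r$ when acting on the polyhomogeneous profile. Because $P_s^h$ is nonlocal, propagating errors through the expansion requires the full polyhomogeneity of $u$ and is most cleanly handled by an edge/b-calculus framework adapted to the codimension $n-k$ conormal singularity along $\Lambda$.
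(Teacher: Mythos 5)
Your proposal is correct and follows essentially the same strategy as the Gonz\'alez--Mazzeo--Sire paper: reduce to the model singularity $\mathbb{R}^n\setminus\mathbb{R}^k$, use conformal covariance to pass from $P_s^{h_0}(1)$ to $(-\Delta_{\mathbb{R}^n})^s$ applied to $|z|^{-(n-2s)/2}$, observe that a function constant in the tangential variable $y$ reduces this to the $(n-k)$-dimensional fractional Laplacian of the same homogeneous profile, and read off the Gamma-ratio constant $D$ from the Fourier transform of $|z|^{-\alpha}$; the sign of $D$, forced to be positive by matching the singular Yamabe equation at leading order with $c_0>0$ and $Q_s^{h_u}>0$, is exactly \eqref{eq:dimrest}. (You correctly use the curvature of the singular metric $h_u$ in the matching; the survey's $Q_s^h$ is a slip for $Q_s^{h_u}$.) Your computation of $D$ and the $k=0$ consistency check against \eqref{constant-cylinder} are both right, and your last paragraph correctly flags the one substantive technical burden --- showing via conormal/edge analysis that the metric and lower-order pseudodifferential corrections contribute only strictly less singular powers of $r$ --- which is indeed where the bulk of the rigor in the original proof goes.
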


As we have noted, inequality \eqref{eq:dimrest} is satisfied whenever $k < (n-2s)/2$, and in fact is equivalent to this simpler
inequality when $s  = 1$.  When $s =2$, i.e. for the standard $Q-$curvature, this result is already known: \cite{Chang-Hang-Yang} shows that complete metrics with $Q_2 > 0$ and positive scalar curvature must have singular set with dimension less than $(n-4)/2$, which again agrees with \eqref{eq:dimrest}.

Of course, the main open question is to remove the smoothness assumption on the singular set $\Lambda$. Recent results of \cite{Zhang} show that, under a positive scalar curvature assumption, if $Q_s>0$ for $s\in(1,2)$, then \eqref{dim} holds for any $\Lambda$.

We also remark that a dimension estimate of the type \eqref{eq:dimrest} implies some topological restrictions on $M$: on the homotopy (\cite{Schoen-Yau:book}, chapter VI), on the cohomology (\cite{Nayatani}), or even classification results in the low dimensional case (\cite{Izeki}).\\

Finally, one can also obtain existence of solutions when $s$ is sufficiently near $1$ and $\Lambda$ is smooth by perturbation theory:

\begin{thm}[\cite{Gonzalez-Mazzeo-Sire}]
Let $(M^n, h)$ be compact with nonnegative Yamabe constant and $\Lambda$ a $k$-dimensional submanifold
with $k < \frac12 (n-2)$. Then there exists an $\epsilon > 0$ such that if $s \in (1-\epsilon, 1 + \epsilon)$,
there exists a solution to the fractional singular Yamabe problem \eqref{singular-Yamabe1} with $c > 0$ which is complete
on $M \setminus \Lambda$.
\end{thm}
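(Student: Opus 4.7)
The plan is to deploy the implicit function theorem around the Mazzeo-Pacard solution $u_0$ of the classical ($s=1$) singular Yamabe problem, which by \cite{MP} exists under the hypotheses that $(M,h)$ has nonnegative Yamabe constant and $\Lambda$ is a smooth submanifold of dimension $k<(n-2)/2$. That solution is polyhomogeneous along $\Lambda$ with leading exponent $-(n-2)/2$, and it solves $P_1^h(u_0)=c_0\,u_0^{(n+2)/(n-2)}$ on $M\setminus\Lambda$ for some positive constant $c_0$. The scheme is to treat the parameter $s$ near $1$ as a deformation and produce a nearby polyhomogeneous solution $u_s$ with leading exponent $-(n-2s)/2$, after first shifting to the correct weight by writing $u_s=u_0\,w_s$ with $w_s\to 1$ at $\Lambda$ in an appropriate weighted sense.

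The key analytic step is to invert the linearization at $u_0$. Define
\begin{equation*}
\mathcal{L}_0 \phi := P_1^h \phi - \tfrac{n+2}{n-2}\,c_0\,u_0^{\frac{4}{n-2}}\phi,
\end{equation*}
acting between weighted polyhomogeneous function spaces on $M$ adapted to $\Lambda$ (for instance the edge-Hölder or weighted $b$-Sobolev spaces used in \cite{MP}). By the non-degeneracy of the Mazzeo-Pacard construction, $\mathcal{L}_0$ is Fredholm (and, after a standard argument using the positivity of the Yamabe constant and a suitable choice of weight $\delta$ with $\delta$ lying strictly between the relevant indicial roots for the edge calculus along $\Lambda$), an isomorphism onto its natural range. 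One then needs to check that the family $s\mapsto P_s^h - \tfrac{n+2s}{n-2s}\,c\,u_0^{\frac{4s}{n-2s}}$ is a continuous (indeed, real-analytic) family of bounded operators between suitably chosen weighted spaces for $s$ in a neighborhood of $1$. This uses the meromorphic dependence of the scattering construction in Section \ref{section:scattering} together with the definition of $P_s^{\tilde h}$ on noncompact domains given in \cite{Gonzalez-Mazzeo-Sire} just above; crucially, the principal symbol is $(-\Delta_h)^s$, which varies continuously in $s$, and the lower-order pseudodifferential remainder $\mathcal{K}_s$ (of order $2s-1$) depends smoothly on $s$.

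Once operator continuity and invertibility at $s=1$ are in place, the implicit function theorem in the chosen weighted Banach space produces, for $s\in(1-\epsilon,1+\epsilon)$, a solution $w_s$ near $1$ to the nonlinear equation
\begin{equation*}
P_s^h(u_0 w_s) = c\,(u_0 w_s)^{\frac{n+2s}{n-2s}},
\end{equation*}
for a constant $c=c(s)$ close to $c_0$. Positivity of $u_0 w_s$ follows from closeness to $u_0>0$, and completeness of the associated metric $h_{u_s}=(u_0 w_s)^{4/(n-2s)}h$ follows because $u_0 w_s\sim u_0\to\infty$ along $\Lambda$ at the rate $\mathrm{dist}(\cdot,\Lambda)^{-(n-2s)/2}$, which is the correct blow-up rate to give a complete end of fractional cylindrical type (cf.\ Section \ref{section:cylinder}). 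The sign $c>0$ is preserved by continuity in $s$.

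The main obstacle is the second step: establishing that the linearization $\mathcal{L}_0$ is actually an isomorphism on the correct scale of weighted polyhomogeneous spaces, and that this property is stable as $s$ varies. The indicial roots of the edge operator associated with $P_s^h$ along $\Lambda$ move with $s$, so one must choose the weight $\delta$ to lie in an open interval of admissible weights that remains nonempty and avoids indicial roots for all $s$ near $1$; the condition $k<(n-2)/2$ guarantees that such an interval exists at $s=1$, and the meromorphic family argument shows it persists for small perturbations. All other components of the argument (positivity, completeness, regularity of the constant $c(s)$) are relatively routine consequences of the implicit function theorem in the chosen functional framework.
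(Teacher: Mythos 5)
The paper does not actually prove this theorem: it simply cites \cite{Gonzalez-Mazzeo-Sire} and describes the result as obtainable ``by perturbation theory.'' So the comparison is with the argument in that reference, not with anything written here. Your overall scheme — perturb off a Mazzeo--Pacard solution at $s=1$ via an implicit function theorem in weighted edge spaces — is indeed the approach of \cite{Gonzalez-Mazzeo-Sire}, and you have correctly identified the dimension restriction $k<(n-2)/2$ as the mechanism that keeps the indicial weight window nonempty at $s=1$ and hence, by continuity, for $s$ near $1$.

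That said, there are two real gaps that you flag only lightly or not at all. First, the claim that ``by the non-degeneracy of the Mazzeo--Pacard construction, $\mathcal{L}_0$ is \ldots\ an isomorphism onto its natural range'' understates the central difficulty. The Mazzeo--Pacard solutions are not isolated: the linearized operator has nontrivial bounded kernel coming from Jacobi fields (deformations of the singular set $\Lambda$, Delaunay-parameter shifts, translations of necks), and on weighted edge H\"older or $b$-Sobolev spaces the operator is Fredholm of index depending on the weight, but typically not an isomorphism for the natural symmetric choice. To make the implicit function theorem close, one must either (i) work in a weight regime where the kernel is excluded and then supply a separate surjectivity argument (with a corresponding cokernel analysis and a duality/energy argument using the nonnegative Yamabe hypothesis), or (ii) include a finite-dimensional family of geometric parameters in the unknowns so as to kill the kernel, as in gluing constructions. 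Saying ``isomorphism onto its natural range'' sidesteps exactly the step that makes the proof nontrivial. Second, the assertion that $s\mapsto P_s^h - \tfrac{n+2s}{n-2s} c\, u_0^{4s/(n-2s)}$ is a continuous family of bounded operators ``between suitably chosen weighted spaces'' needs care because the natural domain of $P_s^h$ is $s$-dependent (an $H^s$-type space), so one cannot apply the implicit function theorem in a fixed Banach pair without either reformulating so that the order is fixed (e.g.\ composing with $(-\Delta_h)^{-s}$ or working on a Fr\'echet scale of polyhomogeneous functions with Nash--Moser-type estimates) or proving uniform boundedness on edge H\"older scales with a fixed regularity gain dominated by $2(1-\epsilon)$. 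The meromorphy of the scattering family gives smoothness in $s$ of the Schwartz kernel, but translating that into operator-norm continuity between a fixed source and target space is a step you should justify, not assume.

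\end{document}
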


The next step is to show existence for any value of $s\in(0,1)$. If we do not worry about completeness, partial results have been obtained in \cite{Ao-Chan-Gonzalez-Wei}. We hope to return to this problem elsewhere.

% =============================================================================================
\section{Uniqueness}\label{section:uniqueness}

One of the main questions that arises is, given a manifold $(M^n,h)$, is there a canonical way to define the conformal fractional Laplacian on $M$? this question is equivalent to ask how many Poincar\'e-Einstein fillings $(X^{n+1},g^+)$ one can find. The answer is, in general, not unique, unless the conformal infinity is the round sphere (or equivalently, $\mathbb R^n$) (see the survey \cite{Chang-Qing-Yang:notes}, for instance).

In this section we would like to describe two Poincar\'e-Einstein fillings on the topologically same 4-manifold with the same conformal infinity. This construction comes from the study of thermodynamics of black holes in Anti-de Sitter Space \cite{Hawking-Page}, and it is  explained  in the survey \cite{Chang-Qing-Yang:notes}, but we repeat it here for completeness.

The AdS-Schwarzchild manifold is an Einstein 4-manifold described as  $\mathbb R^2\times\mathbb S^2$ with the metric
\begin{equation*}\label{metrica}
g_{m}^+=Vdt^2+V^{-1}dr^2+r^2h_{\mathbb S^2}\quad\mbox{for}\quad
V=1+r^2-\frac{2m}{r}.
\end{equation*}
We call $r_m$ the positive root for $1+r^2-\frac{2m}{r}=0$, and we restrict $r\in [r_m,+\infty)$. $m>0$ is known as the mass parameter, to be chosen later.

Even though this metric seems singular ar $r_m$, we will prove that this is not the case if we make the $t$ variable periodic, i.e., $t\in \mathbb S^1(L)$. To see this, define a function
$\rho:(r_{h},\infty)\to(0,\infty)$ by $$\rho(r)=\int_{r_m}^r V^{-1/2}.$$
One can check that for $r$ near  $r_m$,
$$g_{+1}^m\sim d\rho^2+\frac{(V'(r_m))^2}{4}\rho^2dt^2+r^2{h_{\mathbb S^2}},$$
so the singularity at $r=r_m$ is of the same type as the origin in standard polar coordinates. Thus thus we need to make the $t$ variable periodic, i.e, $0\leq t \leq 2\pi L,$ for
\begin{equation}\label{L}L:=L(m)=\frac{V'(r_m)}{2}=\frac{2r_m}{3r_m^2+1}.
\end{equation}

To show that $g_m^+$ is conformally compact,  we change to the defining function  $s=\frac{1}{r}$. Since $V(r)\approx \frac{1}{s^2}$ when $s\to 0$, then
$$g_{m}^+\sim \frac{1}{s^2}[ds^2+dt^2+{h_{\mathbb S^2}}]\quad\text{as} \quad s\to 0.$$
Therefore, for each $m>0$, $g_m^+$ is Poincar\'e-Einstein and its  conformal infinity is $\mathbb S^1(L)\times \mathbb S^{2}$ with the metric $h_m:=dt^2+h_{\mathbb S^2}$.

But we could ask the reverse question of, given $L$, how many Poincar\'e-Einstein fillings one can find for $S^1(L)\times \mathbb S^{2}$. Looking at \eqref{L}, $r_{m}=\frac{1}{\sqrt{3}}$  is a critical point for $L(r_m)$, actually a maximum with value $$L\big(\tfrac{1}{\sqrt 3}\big)=\tfrac{1}{\sqrt{3}}.$$
There holds \begin{itemize}
  \item For  $0<L<1/\sqrt 3$, we can find two different masses $m_1$ and $m_2$ with the same $L(m)$.
  \item For $L=1/\sqrt 3$, there exist only one mass $m$ which gives $L(m)$.
  \item If $L>1/\sqrt 3$, there does not exist any mass which gives $L(m)$.
\end{itemize}
Thus for the same conformal infinity $\mathbb S^1(L)\times\mathbb S^2$, when $0<L<\frac{1}{\sqrt{3}}$ there  are two non-isometric AdS-Schwarzschild spaces with metrics $g^+_{m_1}$ and $g^+_{m_2}$. The  natural question now is to calculate the symbol of the conformal fractional Laplacian $P_s^m$ on the conformal infinity for each model. This calculation is similar to that of \eqref{intertwining-sphere} for the sphere and \eqref{symbol-cylinder} for the cylinder. But, unfortunately, the spherical harmonic decomposition yields a more complicated ODE that we have not been able to solve analytically.

\section{An introduction to hypersurface conformal geometry}\label{section:hypersurface}

Let  $(\overline X^{n+1},\bar g)$ be any smooth compact manifold with boundary $(M^n, h)$, where $h=\bar g|_M$, for instance, a domain in $\mathbb R^{n+1}$ with the Euclidean metric. One would like to understand the conformal geometry of  $M$ as an embedded hypersurface with respect to the given filling metric $\bar g$, and to produce new extrinsic conformal invariants on this hypersurface.

In this discussion we are mostly interested in the construction of non-local objects on $M$, in particular, the conformal fractional Laplacian, and to understand how this new $P^h_s$ depends on the geometry of the background metric $\bar g$. A  good starting reference is the recent paper  \cite{Graham:singular-Yamabe}, although the author is more interested in renormalized volume rather than scattering (see also the parallel development by \cite{Gover-Waldron:conformal-hypersurface-geometry,Gover-Waldron:renormalized-volume} in the language of tractor calculus).

Let $\rho$ be a geodesic defining function for $M$. This means, in particular, that
$\bar g=d\rho^2+h_\rho$, where $h_\rho$ is a one parameter family of metrics on $M$  with $h_\rho|_{\rho=0}=h$. We would like to produce a suitable asymptotically hyperbolic filling metric $g^+$ for which the scattering problem  \eqref{equation-GZ}-\eqref{general-solution} can be solved in terms of information from $\bar g$ only. Looking at \eqref{E-general}, the reasonable assumption is to ask that $g^+$ has constant scalar curvature
\begin{equation}\label{constant-scalar}R_{g^+}=-n(n+1).\end{equation}
Thus we seek a new defining function $\hat \rho=\hat \rho(\rho)$ such that if we define
$$g^+=\frac{\bar g}{\hat \rho^2},$$
then this $g^+$ is asymptotically hyperbolic and satisfies \eqref{constant-scalar}.  Remark that sometimes it will be more convenient to write $g^+=u^{\frac{4}{n-1}}\bar g$ for $u=\hat \rho^{-\frac{n-1}{2}}$.

This problem for $g^+$ reduces to the singular Yamabe problem of Loewner-Nirenberg (\cite{Loewner-Nirenberg}) for constant negative scalar curvature and it has been well studied (\cite{Mazzeo:regularity-singular-Yamabe,Aviles-McOwen,Andersson-Chrusciel-Friedrich}, for instance). In PDE language, looking at the conformal transformation law for the usual conformal Laplacian \eqref{Yamabe-equation}, it amounts to find a positive solution $u$ in $X$ to the equation
$$-\Delta_{\bar g} u+\tfrac{n-1}{4n} R_{\bar g} u=-\tfrac{n^2-1}{4}u^{\frac{n+3}{n-1}}$$
that has the asymptotic behavior
$$u\sim \rho^{-\frac{n-1}{2}}\quad\text{near }\partial X$$
(recall that we are working on an $(n+1)$-dimensional manifold).
It has been shown that such solution exists and it has a very specific polyhomogeneous expansion near $\partial X$, so that
\begin{equation*}
g^+=\frac{\bar g(1+\rho \alpha+\rho^{n+1}\beta)}{\rho^2},
\end{equation*}
where $\alpha\in \mathcal C^{\infty}(\overline X)$ and $\beta\in \mathcal C^\infty(X)$ has a polyhomogeneous expansion with log terms. This type of expansions often appears in geometric problems, such as in the related \cite{Han-Jiang}, and each of the terms in the expansion has a precise geometric meaning (some are local, others non-local).

In this general setting, scattering for $g^+$ can be considered  (\cite{Guillarmou}), and one is able to construct the conformal fractional Laplacian on $M$ with respect to the starting $\bar g$ once the log terms in the expansion are controlled. For $s\in(0,1)$ these log terms do not affect the asymptotic expansions at the boundary and one has:

\begin{thm}[\cite{Guillarmou-Guillope} for $s=1/2$, \cite{Chang-Gonzalez} in general]
Fix $s\in(0,1)$. Let $(\overline X^{n+1},\bar g)$ be a smooth compact manifold with boundary $M^n$ and set $h:=\bar g|_{M}$. Let $\rho$ be a geodesic defining function. Then there exists a defining function $\hat \rho$ as in the above construction. Moreover, if $U$ is a solution to the following extension problem
\begin{equation*}\label{div3}
\left\{\begin{split}
-\divergence(\hat \rho^a\nabla U)+E(\hat \rho)U &=0\quad\mbox{in }(\bar X,\bar g), \\
U&=u \quad \mbox{on }M,
\end{split}\right.\end{equation*}
for $E(\hat\rho)$ given in \eqref{E-normal-form}, then the conformal fractional Laplacian $P_s^{h}$ on $M$ with respect to the metric $h$ may be constructed as in Theorem \ref{thm-extension2}.
\end{thm}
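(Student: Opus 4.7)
The strategy has two essentially independent parts. First I would produce the defining function $\hat\rho$ by solving the singular Yamabe problem of Loewner--Nirenberg; this manufactures an asymptotically hyperbolic metric $g^+=\bar g/\hat\rho^2$ of constant scalar curvature $-n(n+1)$ out of the given bulk metric $\bar g$, with no Poincar\'e--Einstein assumption on $\bar g$ itself. Second, with this particular $g^+$ in hand, I would invoke Theorem \ref{thm-extension2} to read off $P_s^h$ as the weighted Dirichlet--to--Neumann map of the degenerate elliptic extension, now written intrinsically in terms of $\bar g$ and $\hat\rho$.

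For the first step, the plan is to seek $u=\hat\rho^{-(n-1)/2}$ as the positive solution of
$$-\Delta_{\bar g}u+\tfrac{n-1}{4n}R_{\bar g}u=-\tfrac{n^2-1}{4}u^{(n+3)/(n-1)}\quad\text{in }X,$$
blowing up like $\rho^{-(n-1)/2}$ along $M$. Existence and uniqueness are classical, so I would cite Loewner--Nirenberg, Aviles--McOwen, Mazzeo, and Andersson--Chru\'sciel--Friedrich and focus on the polyhomogeneous structure
$$g^+=\frac{\bar g\bigl(1+\rho\alpha+\rho^{n+1}\beta\bigr)}{\rho^2},\qquad \alpha\in\mathcal C^\infty(\overline X),$$
in which $\beta$ is polyhomogeneous in $\rho$ with any $\log\rho$ terms deferred to order $\rho^{n+1}$ or deeper. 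From this expansion one reads off at once that $\hat\rho$ is positive, vanishes simply on $M$, and satisfies $|d\hat\rho|_{\bar g}\to 1$ at the boundary, so $g^+$ is genuinely asymptotically hyperbolic and $\hat\rho$ is a legitimate (geodesic-type) defining function whose induced boundary metric is $h$.

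For the second step, I would observe that by construction $R_{g^+}\equiv -n(n+1)$ identically on $X$, so in formula \eqref{E-general} the bracketed factor $n(n+1)+R_{g^+}$ vanishes and the zero-th order coefficient $E(\hat\rho)$ collapses to the clean Poincar\'e--Einstein form $\tfrac{n-1+a}{4n}R_{\bar g}\hat\rho^a$ of \eqref{E-normal-form}. The extension problem written with this $E(\hat\rho)$ therefore satisfies the hypotheses of Theorem \ref{thm-extension2} verbatim, the unique minimizer $U$ of the associated weighted energy exists in $W^{1,2}(X,\hat\rho^a)$, and $P_s^h u$ is extracted from the weighted normal derivative of $U$ at $M$ as in \eqref{compute-fractional}. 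Since $\hat\rho$ is determined solely by $\bar g$, this gives a canonical conformal fractional Laplacian on the hypersurface $M$ dependent only on the ambient $\bar g$.

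The main obstacle is matching the polyhomogeneous expansion of $\hat\rho$ with the Graham--Zworski scattering asymptotics at the indicial roots $n/2\pm s$. For $s\in(0,1)$ both roots lie strictly inside $(0,n)$ and are well separated from the first possible log term at order $\rho^{n+1}$, so no resonance occurs between the scattering indicial exponents and the logarithmic obstructions; this is precisely the reason the result is stated only for $s\in(0,1)$, higher $s$ requiring the separate treatments of \cite{Case-Chang,Yang,Chang-Yang}. Concretely, I would follow the Mazzeo--Melrose 0-calculus (or the more hands-on barrier and asymptotic analysis arguments in \cite{Chang-Gonzalez}) to verify that the solution $w$ of \eqref{equation-GZ} has the decomposition \eqref{general-solution} with $U,\tilde U$ smooth up to $M$, after which the identification of $P_s^h u$ via \eqref{normalized-scattering} and the equivalence in Theorem \ref{thm-Chang-Gonzalez} closes the argument.
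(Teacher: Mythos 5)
Your proposal matches the paper's construction in essentially every step: solve the Loewner--Nirenberg singular Yamabe problem to produce $\hat\rho$ and a constant-scalar-curvature asymptotically hyperbolic $g^+=\bar g/\hat\rho^2$, note that $R_{g^+}\equiv -n(n+1)$ makes the general zero-th order coefficient of \eqref{E-general} collapse to the Poincar\'e--Einstein form \eqref{E-normal-form}, and then invoke Theorem~\ref{thm-extension2} together with the scattering asymptotics; the observation that for $s\in(0,1)$ the indicial exponents $n/2\pm s$ sit safely below the $\rho^{n+1}$ order at which the first logarithmic terms appear in the polyhomogeneous expansion of $g^+$ is exactly the point the paper highlights as the reason the argument goes through in this range. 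This is the paper's own route, and your write-up is consistent with it at the same level of detail.
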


One could also look at higher values of $s\in(0,n/2)$. For example, when $M$ is a surface in Euclidean 3-space, one recovers the Willmore invariant with this construction, so an interesting consequence of this approach  is that it produces new (extrinsic) conformal invariants for hypersurfaces in higher dimensions that generalize the Willmore invariant for a two-dimensional surface. Many open questions still remain since this is a growing subject.

% -------------------------------------------------------------------------------------------------------
% -------------------------------------------------------------------------------------------------------

%\noindent\textbf{Acknowledgements:}

\bibliographystyle{abbrv}

\end{document}